\date{}
\renewcommand{\uppercasenonmath}[1]{}
\theoremstyle{plain}
\newtheorem{theorem}{Theorem}[section]
\newtheorem{proposition}[theorem]{Proposition}
\newtheorem{lemma}[theorem]{Lemma}
\newtheorem{corollary}[theorem]{Corollary}
\theoremstyle{definition}
\newtheorem{example}[theorem]{Example}
\newtheorem{definition}[theorem]{Definition}
\theoremstyle{definition}
\theoremstyle{remark}
\newcommand{\pf}{\noindent\begin {proof}}
\newcommand{\epf}{\end{proof}}
\newcommand{\Ker}{\mbox{\rm Ker}}
\newcommand{\Ext}{\mbox{\rm Ext}}
\newcommand{\Hom}{\mbox{\rm Hom}}
\newcommand{\Tor}{\mbox{\rm Tor}}
\newcommand{\Prufer}{Pr\"{u}fer}
\def\ra{\rightarrow}
\def\GV{{\rm GV}}
\def\Hom{{\rm Hom}}
\def\Ext{{\rm Ext}}
\def\Tor{{\rm Tor}}
\def\fkm{{\frak m}}
\def\ker{{\rm ker}}
\def\Ker{{\rm Ker}}
\def\Im{{\rm Im}}
\def\Coker{{\rm Coker}}
\def\Nil{{\rm Nil}}
\def\NN{{\rm NN}}
\def\NP{{\rm NP}}
\def\Z{{\rm Z}}
\def\U{{\rm U}}
\def\T{{\rm T}}
\def\GV{{\rm GV}}
\def\Max{{\rm Max}}
\def\DW{{\rm DW}}
\def\ZN{{\rm ZN}}
\def\PvMR{{\rm PvMR}}
\def\PvMD{{\rm PvMD}}
\def\SM{{\rm SM}}
\def\Krull{{\rm Krull}}
\begin{document}
\begin{center}
{\large  \bf On $\phi$-$w$-Flat modules  and Their Homological Dimensions}

\vspace{0.5cm}   Xiaolei Zhang$^{a}$, Wei Zhao$^{b}$\\

{\footnotesize a.\ Department of Basic Courses, Chengdu Aeronautic Polytechnic, Chengdu 610100, China\\

b.\ School of Mathematics, ABa Teachers University, Wenchuan 623002, China

E-mail: zxlrghj@163.com\\}
\end{center}

\bigskip
\centerline { \bf  Abstract}
\bigskip
\leftskip10truemm \rightskip10truemm \noindent

In this paper,  we introduce and study the class of $\phi$-$w$-flat modules which are generalizations of both $\phi$-flat modules  and $w$-flat modules. The $\phi$-$w$-weak global dimension $\phi$-$w$-w.gl.dim$(R)$ of a strongly $\phi$-ring $R$ is also introduced and studied. We show that,
 for a strongly $\phi$-ring  $R$, $\phi$-$w$-w.gl.dim$(R)=0$ if and only if  $w$-$dim(R)=0$ if and only if $R$ is a $\phi$-von Neumann ring.   It is also proved that, for a strongly $\phi$-ring $R$, $\phi$-$w$-w.gl.dim$(R)\leq 1$ if and only if  each nonnil ideal of $R$ is $\phi$-$w$-flat, if and only if $R$ is a  $\phi$-$\PvMR$,  if and only if $R$ is a $\PvMR$.
\vbox to 0.3cm{}\\
{\it Key Words:} $\phi$-$w$-flat module; $\phi$-$w$-weak global dimension; $\phi$-von Neumann ring; $\phi$-$\PvMR$.\\
{\it 2010 Mathematics Subject Classification:} Primary: 13A15; Secondary: 13F05.

\leftskip0truemm \rightskip0truemm
\bigskip

Throughout this paper, $R$ denotes a commutative ring with $1\not= 0$ and all modules are unitary. We denote by $\Nil(R)$  the nilpotent radical of $R$, $\Z(R)$ the set of all zero-divisors of $R$ and $\T(R)$ the localization of $R$ at the set of all regular elements. The $R$-submodules $I$ of $\T(R)$ such that $sI\subseteq R$ for some regular element $s$ are said to be \emph{fractional ideals}.
Recall from \cite{A97} that a ring $R$ is  an \emph{$\NP$-ring} if $\Nil(R)$ is a prime ideal, and a \emph{$\ZN$-ring} if $\Z(R)=\Nil(R)$. A prime ideal $P$ is said to be \emph{divided prime} if $P\subsetneq (x)$, for every $x\in R-P$. Set $\mathcal{H}=\{R|R$ is a commutative ring and \Nil(R)\ is a divided prime ideal of $R\}$. A ring $R$ is a \emph{$\phi$-ring} if $R\in \mathcal{H}$. Moreover, a $\ZN$ $\phi$-ring is said to be a \emph{strongly $\phi$-ring}. For a  $\phi$-ring $R$, there is a ring homomorphism $\phi:\T(R)\rightarrow R_{\Nil(R)}$ such that $\phi(a/b)=a/b$ where $a\in R$ and $b$ is a regular element. Denote by the ring $\phi(R)$ the image of $\phi$ restricted to $R$. In 2001, Badawi \cite{A01} investigated  $\phi$-chain rings ($\phi$-CRs for short) and $\phi$-pseudo-valuation rings as a $\phi$-version of chain rings and pseudo-valuation rings. In 2004, Anderson and Badawi \cite{FA04} introduced the concept of $\phi$-\Prufer\ rings and showed that a $\phi$-ring $R$ is $\phi$-\Prufer\ if and only if $R_{\fkm}$ is a $\phi$-chain ring for any maximal ideal $\fkm$ of $R$ if and only if $R/\Nil(R)$ is a \Prufer\ domain if and only if $\phi(R)$ is \Prufer. Later, the authors in \cite{FA05,ALT06} generalized the concepts of Dedekind domains, \Krull\ domains and Mori domains to the context of rings that are in the class $\mathcal{H}$.  In 2013, Zhao et al. \cite{ZWT13} introduced and studied the conceptions of $\phi$-flat modules and $\phi$-von Neumann rings and obtained that a $\phi$-ring is $\phi$-von Neumann if and only if its \Krull\ dimension is $0$.  Recently, Zhao \cite{Z18} gave a homological characterization of $\phi$-\Prufer\ rings as follows: a strongly $\phi$-ring $R$ is $\phi$-\Prufer, if and only if each submodule of a $\phi$-flat module is $\phi$-flat, if and only if each nonnil ideal of $R$ is $\phi$-flat.

Some other important generalizations of classical notions are their $w$-versions. In 1997, Wang and McCasland  {\cite{fm97}} introduced the $w$-modules over strong Mori domains (\SM\ domains for short) which can be seen as a $w$-version of Noetherian domains. In 2011, Yin  \emph{et al}. {\cite{hfxc11}} extended $w$-theories to commutative rings containing zero divisors. The notion of $w$-flat modules appeared first in \cite{f97} for integral domains and was extended to arbitrary commutative rings in \cite{WK15}.  In 2012, Kim and Wang \cite{kf12} introduced $\phi$-\SM\ rings which can be seen as both a $\phi$-version and a $w$-version of Noetherian domains and obtained that a $\phi$-ring $R$ is $\phi$-\SM\ if and only if $R/\Nil(R)$ is an \SM\ domain if and only if $\phi(R)$ is an \SM\ ring. In 2014, Wang and  Kim \cite{KW14} introduced  $w$-w.gl.dim$(R)$ as a generalization of the classical weak global dimension and obtained that a ring $R$ is a von Neumann ring if and only if each $R$-module is $w$-flat, i.e., $w$-w.gl.dim$(R) = 0$. In 2015, Wang and Qiao \cite{fq15} studied several properties of the $w$-weak global dimension, and proved that an integral domain $R$ is a \Prufer\ $v$-multiplication domain ($\PvMD$ for short) if and only if $w$-w.gl.dim$(R) \leq 1$ if and only if $R_{\fkm}$ is a valuation domain for any maximal $w$-ideal $\fkm$ of $R$. As $\phi$-rings are natural extensions of integral domains, we introduce and study the $\phi$-versions of $w$-flat modules, von Neumann rings and $\PvMD$s in this article.  As our work involves $w$-theories, we give a review as below.

Let $R$ be a commutative ring and $J$ a finitely generated ideal of $R$. Then $J$ is called a \emph{$\GV$-ideal} if the natural homomorphism $R\rightarrow \Hom_R(J,R)$ is an isomorphism. The set of all $\GV$-ideals is denoted by $\GV(R)$.
An $R$-module $M$ is said to be \emph{$\GV$-torsion} if for any $x\in M$ there is a $\GV$-ideal $J$ such that $Jx=0$; an $R$-module $M$ is said to be \emph{$\GV$-torsion free} if $Jx=0$, then $x=0$ for any $J\in\GV(R)$ and $x\in M$.  A $\GV$-torsion free module $M$ is said to be a \emph{$w$-module} if  for any $x\in E(M)$ there is a $\GV$-ideal $J$ such that $Jx\subseteq M$ where $E(M)$ is the injective envelope of $M$. The \emph{$w$-envelope} $M_w$ of a $\GV$-torsion free module $M$ is defined  by the minimal $w$-module  that contains $M$. Therefore, a $\GV$-torsion free module $M$ is a $w$-module if and only if $M_w=M$. A \emph{maximal $w$-ideal} for which is maximal among the $w$-submodules of $R$ is proved to be prime (see {\cite[Proposition 3.8]{hfxc11}}). The set of all maximal $w$-ideals is denoted by $ w$-$\Max(R)$. The \emph{$w$-dimension} $w$-$dim(R)$ of a ring $R$ is defined to be the supremum of the heights of all maximal $w$-ideals.

An $R$-homomorphism $f:M\rightarrow N$ is said to be a \emph{$w$-monomorphism} (resp., \emph{$w$-epimorphism}, \emph{$w$-isomorphism}) if for any $ p\in w$-$\Max(R)$, $f_p:M_p\rightarrow N_p$ is a monomorphism (resp., an epimorphism, an isomorphism). Note that $f$ is a $w$-monomorphism (resp., $w$-epimorphism) if and only if $\Ker(f)$ (resp., $\Coker(f)$) is $\GV$-torsion. A sequence $A\rightarrow B\rightarrow C$ is said to be \emph{$w$-exact} if for any $ p\in w$-$\Max(R)$, $A_p\rightarrow B_p\rightarrow C_p$ is exact. A class $\mathcal{C}$ of $R$-modules is said to be \emph{closed under $w$-isomorphisms}  provided that for any $w$-isomorphism $f:M\rightarrow N$, if one of the modules $M$ and $N$ is in $\mathcal{C}$, so is the other. An $R$-module $M$ is said to be of \emph{finite type} if there exist a finitely generated free module $F$ and a $w$-epimorphism $g: F\rightarrow M$, or equivalently, if there exists a
finitely generated $R$-submodule $N$ of $M$ such that $N_w = M_w$. Certainly, the class of  finite type modules is closed under $w$-isomorphisms. Now we proceed to introduce the notion of $\phi$-$w$-flat modules.

\section{$\phi$-$w$-flat modules}

We say an ideal $I$ of $R$ is nonnil provided that there is a non-nilpotent element in  $I$. Denote by $\NN(R)$ the set of all nonnil ideals of $R$. Certainly,  $\GV$-ideals are nonnil. Let $R$  be an $\NP$-ring. It is easy to verify that $\NN(R)$ is a multiplicative system of ideals. That is $R\in \NN(R)$ and for any $I\in \NN(R)$, $J\in \NN(R)$, we have $IJ\in \NN(R)$. Let $M$ be an $R$-module.  Define
\begin{center}
$\phi$-$tor(M)=\{x\in M|Ix=0$ for some  $I\in \NN(R)\}$.
\end{center}
An $R$-module $M$ is said to be \emph{$\phi$-torsion} (resp., \emph{$\phi$-torsion free}) provided that  $\phi$-$tor(M)=M$ (resp., $\phi$-$tor(M)=0$). Clearly, if $R$ is an $\NP$-ring, the class of $\phi$-torsion modules is closed under submodules, quotients, direct sums and direct limits.  Thus an $\NP$-ring $R$ is $\phi$-torsion free if and only if every flat module is $\phi$-torsion free if and only if $R$ is a  $\ZN$-ring (see \cite[Proposition 2.2]{Z18}). The classes of $\phi$-torsion modules and $\phi$-torsion free modules constitute a hereditary torsion theory of finite type. For more details, refer to \cite{S79}.\\
\begin{lemma}\label{w-phi-NN}
Let $R$ be an $\NP$-ring, $\fkm$  a maximal $w$-ideal of $R$ and $I$ an ideal of $R$. Then $I\in \NN(R)$ if and only if $I_{\fkm}\in \NN(R_{\fkm})$.
\end{lemma}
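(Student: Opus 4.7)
The plan is to unravel both directions directly from the definitions, using two observations: that in an $\NP$-ring $\Nil(R)$ is a prime ideal, and that every maximal $w$-ideal $\fkm$ is prime (by \cite[Proposition 3.8]{hfxc11}), so $\Nil(R)\subseteq \fkm$.

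For the forward direction, I would take a witness $a\in I$ with $a\notin \Nil(R)$ and claim that $a/1\in I_{\fkm}$ is non-nilpotent in $R_{\fkm}$. Suppose for contradiction that $(a/1)^n = 0$ in $R_{\fkm}$; then there exists $s\in R\setminus \fkm$ with $sa^n=0$. In particular $sa^n\in \Nil(R)$, and since $\Nil(R)$ is prime and $a\notin \Nil(R)$, also $a^n\notin \Nil(R)$, forcing $s\in \Nil(R)\subseteq \fkm$, which contradicts $s\notin \fkm$. Hence $a/1$ is a non-nilpotent element of $I_{\fkm}$, so $I_{\fkm}\in \NN(R_{\fkm})$.

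For the converse, I would start from any non-nilpotent element of $I_{\fkm}$, which can be written as $a/s$ with $a\in I$ and $s\in R\setminus \fkm$. If $a$ were in $\Nil(R)$, say $a^n=0$, then $(a/s)^n = a^n/s^n = 0$ in $R_{\fkm}$, contradicting the choice of $a/s$. Thus $a\in I$ is non-nilpotent, and $I\in \NN(R)$.

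The only subtle step is the forward direction, where one must use primality of $\Nil(R)$ and the containment $\Nil(R)\subseteq \fkm$ to eliminate the annihilator $s$; the converse is purely a functoriality check. I do not anticipate any serious obstacle, as everything reduces to the prime-ideal behavior of $\Nil(R)$ in an $\NP$-ring together with the fact that maximal $w$-ideals are prime.
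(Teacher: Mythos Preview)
Your proposal is correct and follows essentially the same approach as the paper: both directions proceed exactly as you describe, with the forward direction hinging on the primality of $\Nil(R)$ together with $\Nil(R)\subseteq\fkm$ to derive a contradiction from $sa^n=0$, and the converse being the obvious observation that a non-nilpotent $a/s$ forces $a$ to be non-nilpotent. The only cosmetic difference is that the paper phrases the key step by passing to the integral domain $R/\Nil(R)$ rather than invoking primality of $\Nil(R)$ directly, but this is the same argument.
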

\begin{proof}
Let $I\in \NN(R)$  and $x$  a non-nilpotent element in $I$. We will show the element $x/1$ in $I_{\fkm}$ is a non-nilpotent element of $R_{\fkm}$. If $(x/1)^n=x^n/1=0$ in $R_{\fkm}$ for some positive integer $n$,  there is an $s\in R-\fkm$ such that $sx^n=0$ in $R$. Since $R$ is an $\NP$-ring,  $\Nil(R)$ is the minimal prime $w$-ideal of $R$. In the integral domain $R/\Nil(R)$, we have $\overline{s}\overline{x^n}=\overline{0}$, thus $\overline{x^n}=\overline{0}$ since $s\not\in  \Nil(R)$. So $x\in \Nil(R)$, a contradiction.

Let $x/s$ be a non-nilpotent element in $I_{\fkm}$ where $x\in I$ and $s\in R-{\fkm}$. Clearly, $x$ is non-nilpotent and thus $I\in \NN(R)$.
\end{proof}

\begin{proposition}\label{w-phi-tor}
Let $R$ be an $\NP$-ring, $\fkm$  a maximal $w$-ideal of $R$ and $M$ an $R$-module. Then $M$ is $\phi$-torsion over $R$ if and only $M_{\fkm}$ is $\phi$-torsion over $R_{\fkm}$.
\end{proposition}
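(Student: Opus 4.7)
The plan is to establish both directions by transferring non-nilpotent annihilators between $R$ and $R_{\fkm}$ using Lemma~\ref{w-phi-NN}, with the $\NP$-ring hypothesis that $\Nil(R)$ is prime playing the decisive role in the converse. For the forward direction, I would pick an arbitrary $y=x/s \in M_{\fkm}$ with $x \in M$ and $s \in R\setminus\fkm$; the $\phi$-torsion assumption on $M$ furnishes $I \in \NN(R)$ with $Ix=0$, hence $I_{\fkm}\, y = 0$, and Lemma~\ref{w-phi-NN} gives $I_{\fkm} \in \NN(R_{\fkm})$, so $y$ is $\phi$-torsion in $M_{\fkm}$.

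For the converse, I would show that for each $x \in M$ the annihilator $\ann_R(x)$ is a nonnil ideal of $R$. Applying the hypothesis to $x/1 \in M_{\fkm}$ yields an ideal $I' \in \NN(R_{\fkm})$ annihilating $x/1$, and by definition $I'$ contains a non-nilpotent element of the form $a/s$ with $a \in R$ and $s \in R\setminus\fkm$. From $(a/s)(x/1)=0$ in $M_{\fkm}$ one extracts $u \in R\setminus\fkm$ with $uax=0$, so $ua \in \ann_R(x)$. The sole nontrivial point is to verify that $ua$ itself is non-nilpotent in $R$.

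This is where the $\NP$-hypothesis intervenes. I would pass to the integral domain $R/\Nil(R)$: since every maximal $w$-ideal is prime (hence contains $\Nil(R)$), the element $u \notin \fkm$ has nonzero image $\overline{u}$; and since $a/s$ is non-nilpotent in $R_{\fkm}$, a vanishing power $a^n=0$ would give $(a/s)^n=0$, so $a$ must be non-nilpotent in $R$ and $\overline{a}\ne 0$. In the domain, $\overline{ua}=\overline{u}\,\overline{a}\ne 0$, so $ua \notin \Nil(R)$, as desired. The main obstacle to anticipate is precisely this promotion from a local annihilator $a/s$ in $R_{\fkm}$ to a global non-nilpotent annihilator $ua$ in $R$: clearing denominators by multiplying with $u$ could in principle destroy non-nilpotency, and the $\NP$-hypothesis is exactly what prevents the product from falling into the nilradical.
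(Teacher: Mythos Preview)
Your proposal is correct and follows essentially the same approach as the paper's proof. Both directions match: the forward direction localizes a nonnil annihilator via Lemma~\ref{w-phi-NN}, and for the converse both arguments produce a global annihilator of the form (non-nilpotent element)$\times$(element outside $\fkm$) and invoke primeness of $\Nil(R)$ to conclude this product is non-nilpotent; the only cosmetic difference is that the paper obtains its non-nilpotent element $t$ by contracting the annihilating ideal to $R$ and applying Lemma~\ref{w-phi-NN}, whereas you extract $a$ directly from a non-nilpotent fraction $a/s$---and you spell out the role of the $\NP$-hypothesis more explicitly than the paper does.
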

\begin{proof} Let $M$ be an $R$-module and $x\in M$. If $M_{\fkm}$ is $\phi$-torsion over $R_{\fkm}$, there is an ideal $I_{\fkm}\in \NN(R_{\fkm})$ such that $I_{\fkm} x/1=0$ in $R_{\fkm}$. Let $I$ be the preimage of $I_{\fkm}$ in $R$. Then $I$ is nonnil  by Lemma \ref{w-phi-NN}. Thus there is a non-nilpotent element $t\in I$  such that $tkx=0$ for some $k\not\in m$. Let $s=tk$. Then we have $(s)\in \NN(R)$ and $(s)x=0$. Thus $M$ is $\phi$-torsion. Suppose $M$ is $\phi$-torsion over $R$.  Let $x/s$ be an element in $M_{\fkm}$. Then there is an ideal $I\in \NN(R)$ such that $Ix=0$, and thus $I_{\fkm}x/s=0$ with $I_{\fkm}\in\Nil(R_{\fkm})$  by Lemma \ref{w-phi-NN}. It follows that $M_{\fkm}$ is $\phi$-torsion over $R_{\fkm}$.
\end{proof}

Recall from \cite{WK15} that an $R$-module $M$ is said to be \emph{$w$-flat} if for any $w$-monomorphism $f: A\rightarrow B$, the induced sequence $f\otimes_R 1:A\otimes_R M\rightarrow B\otimes_R M$ is also a $w$-monomorphism. Obviously, $\GV$-torsion modules and flat modules are all $w$-flat. It was proved that the class of $w$-flat modules is closed under $w$-isomorphisms (see {\cite[Corollary 6.7.4]{fk16}}). Following \cite[Definition 3.1]{ZWT13}, an $R$-module $M$ is said to be \emph{${\phi}$-flat} if for every monomorphism $f : A\rightarrow B$ with $\Coker(f)$ $\phi$-torsion, $f\otimes_R 1 : A\otimes_R M \rightarrow B\otimes_R M$ is a monomorphism. Obviously flat modules are both ${\phi}$-flat and $w$-flat. Now we give a generalization of both ${\phi}$-flat modules and $w$-flat modules.

\begin{definition}\label{w-phi-flat }
Let $R$ be a ring. An $R$-module $M$ is said to be \emph{$\phi$-$w$-flat} if for every monomorphism $f : A\rightarrow B$ with $\Coker(f)$ $\phi$-torsion,  $f\otimes_R 1 : A\otimes_R M \rightarrow B\otimes_R M$ is a $w$-monomorphism; equivalently, if $0 \rightarrow A \rightarrow  B \rightarrow  C \rightarrow 0$ is an exact exact sequence
with $C$ $\phi$-torsion, then $0 \rightarrow A\otimes_R M \rightarrow  B\otimes_R M \rightarrow C\otimes_R M \rightarrow  0$ is
$w$-exact.
\end{definition}
Clearly ${\phi}$-flat modules and $w$-flat modules are $\phi$-$w$-flat.  It is well known that an $R$-module $M$ is flat if and only if the induced homomorphism $1\otimes_R f:M\otimes_R I\rightarrow M\otimes_R R$ is exact for any (finitely generated) ideal $I$,  if and only if the multiplication homomorphism $i:I\otimes_R M\ra IM $ is an isomorphism for any (finitely generated) ideal $I$, if and only if $\Tor^R_1 (R/I, M) = 0$ for any (finitely generated) ideal $I$ of $R$. Some similar characterizations of $w$-flat modules and ${\phi}$-flat modules are given in \cite[Proposition 1.1]{KW14} and \cite[Theorem 3.2]{ZWT13}, respectively. We can also obtain some  similar characterizations of $\phi$-$w$-flat modules.

\begin{theorem}\label{w-phi-flat}
Let $R$ be an $\NP$-ring. The following statements are equivalent for an $R$-module $M$:
\begin{enumerate}
    \item $M$ is $\phi$-$w$-flat;
    \item  $M_{\fkm}$ is  ${\phi}$-flat over $R_{\fkm}$ for all $\fkm\in w$-$Max(R)$;
    \item $\Tor^R_1 (T, M) $ is $\GV$-torsion for all $($finite type$)$ $\phi$-torsion $R$-modules $T$;
    \item $\Tor^R_1 (R/I, M)$ is $\GV$-torsion for all $($finite type$)$ nonnil ideals $I$ of $R$;
     \item   $f\otimes_R 1:I\otimes_R M\rightarrow R\otimes_R M$ is $w$-exact for all $($finite type$)$ nonnil ideals  $I$ of $R$;
     \item    the multiplication homomorphism $i:I\otimes_R M\rightarrow IM $ is a $w$-isomorphism for all $($finite type$)$ ideals $I$;
     \item    let $0\rightarrow K\rightarrow F\rightarrow M\rightarrow 0$ be an exact sequence of $R$-modules, where $F$ is free. Then $(K\cap FI)_w = (IK)_w$ for all $($finite type$)$  nonnil ideals $I$ of $R$.

\end{enumerate}
\end{theorem}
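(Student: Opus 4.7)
The plan is to first reduce $\phi$-$w$-flatness to a $w$-local condition, namely that $M_{\fkm}$ is $\phi$-flat over $R_{\fkm}$ for every maximal $w$-ideal $\fkm$, using Lemma \ref{w-phi-NN} and Proposition \ref{w-phi-tor}. From there, standard $\Tor$-theoretic manipulations translate the local condition into the remaining characterizations.

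For the central equivalence $(1)\Leftrightarrow(2)$, the direction $(2)\Rightarrow(1)$ is the easier one: given a short exact sequence $0\to A\to B\to C\to 0$ of $R$-modules with $C$ $\phi$-torsion, Proposition \ref{w-phi-tor} gives that each $C_{\fkm}$ is $\phi$-torsion over $R_{\fkm}$, so $\phi$-flatness of $M_{\fkm}$ forces the localized tensored sequence to be exact at every $\fkm\in w$-$Max(R)$, which is by definition $w$-exactness. Conversely, given a monomorphism of $R_{\fkm}$-modules with $\phi$-torsion cokernel, I would re-express $\phi$-flatness of $M_{\fkm}$ as the vanishing of $\Tor^{R_{\fkm}}_1(R_{\fkm}/J,M_{\fkm})$ for every nonnil ideal $J$ of $R_{\fkm}$. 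Using Lemma \ref{w-phi-NN} to lift $J$ to a nonnil ideal $I$ of $R$ with $I_{\fkm}=J$, together with the flat base change $\Tor^{R}_1(R/I,M)_{\fkm}=\Tor^{R_{\fkm}}_1(R_{\fkm}/I_{\fkm},M_{\fkm})$, the task reduces to showing $\Tor^R_1(R/I,M)$ is $\GV$-torsion, which follows once the equivalence with $(4)$ is in hand.

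The remaining chain is purely homological. $(1)\Leftrightarrow(3)$ follows from the long exact $\Tor$ sequence: $w$-exactness of the tensored sequence is equivalent to $\Tor^R_1(C,M)$ being $\GV$-torsion. For $(3)\Leftrightarrow(4)$ I would write an arbitrary $\phi$-torsion module $T$ as a directed union of its cyclic submodules $Rx\cong R/\ann(x)$ with $\ann(x)\in\NN(R)$, use that $\Tor$ commutes with direct limits, and appeal to the closure of $\GV$-torsion under direct limits and submodules; the \emph{finite type} qualifier is threaded through by noting that any nonnil ideal is the directed union of its finitely generated nonnil subideals. $(4)\Leftrightarrow(5)$ is the long exact sequence applied to $0\to I\to R\to R/I\to 0$, identifying $\Ker(f\otimes_R 1)$ with $\Tor^R_1(R/I,M)$. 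For $(5)\Leftrightarrow(6)$, the multiplication map $i$ is $f\otimes_R 1$ followed by the identification $R\otimes_R M\cong M$ and restricted to the image $IM$, so $\Ker(i)=\Ker(f\otimes_R 1)$ while $i$ is automatically surjective; the extension of $(6)$ from nonnil to arbitrary finite type ideals is handled by localizing at each $\fkm$ via $(2)$ and treating finite type nil ideals directly with the aid of Lemma \ref{w-phi-NN}. Finally, $(4)\Leftrightarrow(7)$ uses the classical identification $\Tor^R_1(R/I,M)\cong(K\cap FI)/IK$ obtained by tensoring the presentation $0\to K\to F\to M\to 0$ with $R/I$; this quotient is $\GV$-torsion precisely when $(K\cap FI)_w=(IK)_w$.

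The main obstacle I foresee is threading the \emph{finite type} restriction consistently through the direct-limit reductions in $(3)\Leftrightarrow(4)$ and the refinement in $(6)$, together with the lifting of nonnil ideals of $R_{\fkm}$ back to nonnil ideals of $R$ required for $(1)\Leftrightarrow(2)$. Both rest essentially on the $w$-local principle encoded by Lemma \ref{w-phi-NN} and Proposition \ref{w-phi-tor}, which let us pass freely between nonnilness and $\phi$-torsion at the ring and at its $w$-localizations.
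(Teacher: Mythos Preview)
Your overall architecture---pivot through the local condition $(2)$ and then run $\Tor$-theoretic manipulations---matches the paper's. Two differences are worth noting.

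For $(1)\Rightarrow(2)$ the paper argues more directly and avoids your detour through $(4)$. Given an $R_{\fkm}$-monomorphism $f:A\to B$ with $\phi$-torsion cokernel over $R_{\fkm}$, Proposition~\ref{w-phi-tor} says $\Coker(f)$ is already $\phi$-torsion as an $R$-module; applying $(1)$ to $f$ viewed over $R$ makes $f\otimes_R M$ a $w$-monomorphism, and localizing at $\fkm$ together with $N_{\fkm}\otimes_R M\cong N_{\fkm}\otimes_{R_{\fkm}}M_{\fkm}$ gives the desired injectivity. Your route---first establish $(1)\Leftrightarrow(4)$, then lift nonnil ideals along $R\to R_{\fkm}$ via Lemma~\ref{w-phi-NN} and use flat base change on $\Tor$---is valid but makes the logical order more tangled. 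For the block $(2)$--$(7)$ the paper simply invokes the $\phi$-flat characterizations of \cite[Theorem~3.2]{ZWT13} and localizes at every $\fkm\in w$-$\Max(R)$; your explicit long-exact-sequence arguments are a legitimate, more self-contained alternative that buys independence from that reference at the cost of length.

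There is, however, a genuine slip in your sketch of $(4)\Rightarrow(3)$: a $\phi$-torsion module $T$ is \emph{not} in general the directed union of its cyclic submodules, since the family $\{Rx:x\in T\}$ is not directed under inclusion. The standard repair is to write $T$ as the directed union of its finitely generated submodules, and for each finitely generated $\phi$-torsion $T'=Rx_1+\cdots+Rx_n$ use the filtration $0\subset Rx_1\subset Rx_1+Rx_2\subset\cdots\subset T'$ whose successive quotients are cyclic $\phi$-torsion; then iterate the long exact $\Tor$ sequence, using that $\GV$-torsion modules are closed under extensions. Your remark about extending $(6)$ to arbitrary (nil) ideals ``with the aid of Lemma~\ref{w-phi-NN}'' is also unsubstantiated---that lemma says nothing about nil ideals---but since the paper handles $(2)$--$(7)$ entirely by citation and localization, this point does not separate your argument from theirs.
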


\begin{proof}
$(1)\Rightarrow (2)$: Let $\fkm$ be a maximal $w$-ideal of $R$, $f: A_{\fkm}\rightarrow B_{\fkm}$ an $R_{\fkm}$-homomorphism with $\Coker(f)$  $\phi$-torsion over $R_{\fkm}$. Then $\Coker(f)$ is $\phi$-torsion over $R$  by Proposition \ref{w-phi-tor}. It follows that $f\otimes_{R} M: A_{\fkm}\otimes_{R} M\rightarrow B_{\fkm}\otimes_{R} M$ is a $w$-monomorphism over $R$. Localizing at $\fkm$, we have $f\otimes_{R_{\fkm}} M_{\fkm}: A_{\fkm}\otimes_{R_{\fkm}} M_{\fkm}\rightarrow B_{\fkm}\otimes_{R_{\fkm}} M_{\fkm}$ is a monomorphism over $R_{\fkm}$ since $N_{\fkm}\otimes_{R} M_{\fkm}\cong N_{\fkm}\otimes_{R_{\fkm}} M_{\fkm}$ for any $R$-module $N$. It follows that $M_{\fkm}$ is  ${\phi}$-flat over $R_{\fkm}$.

$(2)\Rightarrow (1)$: Let  $f : A\rightarrow B$ be a monomorphism with $\Coker(f)$ $\phi$-torsion. For any $\fkm\in w$-$Max(R)$, we have $f_{\fkm}: A_{\fkm}\rightarrow B_{\fkm}$ is a monomorphism with  $coker(f_{\fkm})$ $\phi$-torsion over $R_{\fkm}$ by Proposition \ref{w-phi-tor}. Since  $M_{\fkm}$ is  ${\phi}$-flat over $R_{\fkm}$, $f_{\fkm}\otimes_{R_{\fkm}} M_{\fkm}: A_{\fkm}\otimes_{R_{\fkm}} M_{\fkm}\rightarrow B_{\fkm}\otimes_{R_{\fkm}} M_{\fkm}$ is a monomorphism. Thus $f\otimes_{R} M: A\otimes_{R} M\rightarrow B\otimes_{R} M$ is a $w$-monomorphism. Consequently, $M$ is $\phi$-$w$-flat.

 The equivalences of $(2)-(7)$  hold from \cite[Theorem 3.2]{ZWT13} by localizing at all maximal $w$-ideals.
\end{proof}

\begin{corollary}\label{w-closed}
Let $R$ be an $\NP$-ring. The class of $\phi$-$w$-flat modules is closed under $w$-isomorphisms.
\end{corollary}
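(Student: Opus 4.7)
The plan is to exploit the local characterization of $\phi$-$w$-flatness established in Theorem \ref{w-phi-flat}, namely condition $(2)$: an $R$-module $M$ is $\phi$-$w$-flat if and only if $M_{\fkm}$ is $\phi$-flat over $R_{\fkm}$ for every $\fkm\in w\mbox{-}\Max(R)$. Given this criterion, it suffices to show that a $w$-isomorphism $f\colon M\to N$ induces an isomorphism $f_{\fkm}\colon M_{\fkm}\to N_{\fkm}$ of $R_{\fkm}$-modules for every maximal $w$-ideal $\fkm$; the conclusion is then immediate, since $\phi$-flatness is plainly preserved under isomorphism.

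The key preliminary observation is that every $\GV$-torsion module $T$ satisfies $T_{\fkm}=0$ for each $\fkm\in w\mbox{-}\Max(R)$. Indeed, for $x\in T$ there is some $J\in\GV(R)$ with $Jx=0$; since $J$ is a $\GV$-ideal and $\fkm$ is a $w$-submodule of $R$, one has $J\not\subseteq \fkm$, so there exists $s\in J\setminus \fkm$ with $sx=0$, forcing $x/1=0$ in $T_{\fkm}$. Applied to $\Ker(f)$ and $\Coker(f)$, which are $\GV$-torsion by the definition of a $w$-isomorphism, this yields $\Ker(f)_{\fkm}=\Coker(f)_{\fkm}=0$. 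Since localization is exact, $f_{\fkm}$ is then an isomorphism.

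Assembling the pieces: if $M$ is $\phi$-$w$-flat, then by Theorem \ref{w-phi-flat}(2), $M_{\fkm}$ is $\phi$-flat over $R_{\fkm}$ for every $\fkm\in w\mbox{-}\Max(R)$; since $N_{\fkm}\cong M_{\fkm}$, the module $N_{\fkm}$ is also $\phi$-flat over $R_{\fkm}$, so $N$ is $\phi$-$w$-flat by the same theorem. The argument is symmetric in $M$ and $N$, completing the proof. There is essentially no obstacle here beyond the two clean reductions above; the content of the corollary is fully encapsulated in the local criterion of Theorem \ref{w-phi-flat}, after which it becomes a short bookkeeping argument.
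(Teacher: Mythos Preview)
Your proof is correct but follows a different route from the paper's. The paper argues via condition~(3) of Theorem~\ref{w-phi-flat} (the $\Tor$ criterion): it factors the $w$-isomorphism $f$ through its image $L$, obtaining short exact sequences $0\to T_1\to M\to L\to 0$ and $0\to L\to N\to T_2\to 0$ with $T_1,T_2$ $\GV$-torsion, and then runs the long exact $\Tor$ sequences against an arbitrary $\phi$-torsion module $T$; since all $\Tor$ groups of $\GV$-torsion modules are $\GV$-torsion, $\Tor^R_1(T,M)$ is $\GV$-torsion iff $\Tor^R_1(T,L)$ is iff $\Tor^R_1(T,N)$ is. You instead invoke condition~(2) of the same theorem (the local criterion) and reduce to the fact that $f_{\fkm}$ is an honest isomorphism for each $\fkm\in w\mbox{-}\Max(R)$. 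Your route is shorter and more transparent here---indeed, in this paper $w$-isomorphism is \emph{defined} by the condition that $f_{\fkm}$ is an isomorphism for every maximal $w$-ideal $\fkm$, so your ``key preliminary observation'' is essentially verifying the equivalence of the two descriptions of $w$-isomorphism given in the introduction. The paper's $\Tor$-based argument, while slightly longer, has the virtue of being more portable to settings where a clean local characterization may not be available.
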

\begin{proof} Let $f:M\rightarrow N$ be a $w$-isomorphism and $T$  a $\phi$-torsion module.  There exist two exact sequences $0\rightarrow T_1\rightarrow M\rightarrow L\rightarrow 0$ and $0\rightarrow L\rightarrow N\rightarrow T_2\rightarrow 0$ with $T_1$ and $T_2$ $\GV$-torsion.
Considering the induced two long exact sequences $\Tor^R_1(T,T_1)\rightarrow \Tor^R_1 (T,M)\rightarrow \Tor^R_1 (T,L)\rightarrow T\otimes T_1$ and $\Tor^R_2 (T,T_2)\rightarrow \Tor^R_1(T,L)\rightarrow \Tor^R_1(T,N)\rightarrow \Tor^R_1(T,T_2)$, we have $M$ is $\phi$-$w$-flat if and only if $N$ is $\phi$-$w$-flat by Theorem \ref{w-phi-flat}.
\end{proof}

\begin{lemma} \label{nin}
Let $R$ be a $\phi$-ring and  $I$ a nonnil ideal of $R$. Then $\Nil(R)=I\Nil(R)$.
\end{lemma}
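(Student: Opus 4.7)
The plan is to use both defining properties of a $\phi$-ring: that $\Nil(R)$ is a \emph{divided} prime ideal. Since $I$ is nonnil, pick some $x \in I$ with $x \notin \Nil(R)$; the divided property then gives $\Nil(R) \subseteq (x)$.

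For the nontrivial inclusion $\Nil(R) \subseteq I\Nil(R)$, let $n \in \Nil(R)$ be arbitrary. By the previous step, $n = rx$ for some $r \in R$. Now invoke primality: since $rx = n \in \Nil(R)$ and $\Nil(R)$ is prime, we must have either $r \in \Nil(R)$ or $x \in \Nil(R)$; the latter is excluded by our choice of $x$, so $r \in \Nil(R)$. Hence $n = xr \in I \cdot \Nil(R)$, giving $\Nil(R) \subseteq I\Nil(R)$. The reverse inclusion $I\Nil(R) \subseteq \Nil(R)$ is immediate because $\Nil(R)$ is an ideal.

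There is no real obstacle here; the argument is just a clean use of the two hypotheses packed into the notion of divided prime. The only thing to be slightly careful about is not to assume $x$ itself is nilpotent or a unit — we only know $x \notin \Nil(R)$, which is exactly the input the divided property needs to conclude $\Nil(R) \subseteq (x)$, and exactly what primality needs to force $r \in \Nil(R)$.
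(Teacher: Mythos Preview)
Your proof is correct and follows essentially the same approach as the paper's: pick a non-nilpotent $x\in I$, use the divided property to get $\Nil(R)\subseteq (x)$, and then show the factor lies in $\Nil(R)$. The only cosmetic difference is that the paper phrases the last step by passing to the integral domain $R/\Nil(R)$ (where $\overline{n}=\overline{x}\,\overline{r}=0$ with $\overline{x}\neq 0$ forces $\overline{r}=0$), while you invoke primality of $\Nil(R)$ directly; these are the same argument.
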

\begin{proof} Let $I$ be a nonnil ideal of $R$ with a non-nilpotent element $s\in I$.  Then $\Nil(R)\subseteq (s)$. Thus for any $a\in \Nil(R)$, there exists $b\in R$ such that $a=sb$. Thus $\overline{a}=\overline{s}\overline{b}$ in the integral domain $R/\Nil(R)$. Since $\overline{a}=0$ and $ \overline{s}\not=0$, we have $\overline{b}=0$. So $b\in \Nil(R)$ and then $\Nil(R)\subseteq s\Nil(R)\subseteq I\Nil(R)\subseteq \Nil(R)$. It follows that  $\Nil(R)=I\Nil(R)$.
\end{proof}

\begin{proposition}\label{phi-flat-nil}
Let $R$ be a $\phi$-ring and $M$ an $R$-module. Then $ M/\Nil(R)M$ is $\phi$-flat over $R$ if and only if $M/\Nil(R)M$ is flat over $R/\Nil(R)$. Consequently, $R/\Nil(R)$ is always $\phi$-flat over $R$.
\end{proposition}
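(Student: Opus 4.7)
The plan is to reduce the question to a purely ring-theoretic identification on tensor products, using two special features of a $\phi$-ring. Set $N=\Nil(R)$ and write $\bar R = R/N$. Since $N$ is a divided prime ideal, every nonnil ideal $I$ of $R$ contains $N$; and by Lemma \ref{nin} we have $IN = N$. Hence, for any $\bar R$-module $X$, applying $I\otimes_R -$ to the exact sequence $0\to N\to R\to \bar R\to 0$ and using right-exactness gives $I\otimes_R \bar R \cong I/IN = I/N$, and extension of scalars then yields a natural isomorphism
\[
I\otimes_R X \;\cong\; (I\otimes_R \bar R)\otimes_{\bar R} X \;\cong\; (I/N)\otimes_{\bar R} X,
\]
compatible with the multiplication maps into $X$.

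With this in hand I would apply it to $X = M/NM$, which is naturally an $\bar R$-module, and handle both directions via ideal-theoretic criteria. For the ``if'' direction, by the tensor criterion for $\phi$-flatness in \cite[Theorem 3.2]{ZWT13} it suffices to check that $I\otimes_R X \to X$ is injective for every nonnil ideal $I$ of $R$; by the display above this is exactly the injectivity of $(I/N)\otimes_{\bar R} X \to X$, which holds because $X$ is flat over $\bar R$. For the ``only if'' direction, every ideal of $\bar R$ is of the form $I/N$ with $N\subseteq I\subseteq R$, and such $I$ is nonnil precisely when $I/N\neq 0$, so the display covers exactly the nonzero ideals of $\bar R$; the zero-ideal case of $\bar R$-flatness is trivial. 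Thus $\phi$-flatness over $R$ transports back to flatness over $\bar R$. The ``consequently'' clause then follows by taking $M=R$, since $R/N$ is free, hence flat, over itself.

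The principal subtlety is the identification $I\otimes_R X \cong (I/N)\otimes_{\bar R} X$ and, especially, the fact that it is natural in $I$: the two crucial inputs are that $N$ is divided prime (ensuring $N\subseteq I$ so that $I/N$ even makes sense as an ideal of $\bar R$) and Lemma \ref{nin} (which pins $IN$ down as exactly $N$ rather than as a strictly smaller submodule of $N$). Once these are combined, the rest of the argument is routine bookkeeping on tensor products.
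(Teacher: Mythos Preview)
Your proof is correct. Both your argument and the paper's hinge on Lemma~\ref{nin} together with the fact that $N=\Nil(R)$ is contained in every nonnil ideal, but the execution differs. The paper works at the level of $\Tor$: it computes $\Tor_1^R(R/N,R/I)\cong (I\cap N)/IN = N/N = 0$ for nonnil $I$, and then invokes a change-of-rings isomorphism to identify $\Tor_1^R(R/I,M/NM)$ with $\Tor_1^{\bar R}(\bar R/\bar I,M/NM)$. You instead stay entirely at the tensor-product level, computing $I\otimes_R\bar R\cong I/IN=I/N$ directly from right-exactness and then using associativity $I\otimes_R X\cong (I/N)\otimes_{\bar R}X$ for any $\bar R$-module $X$. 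Your route is more elementary in that it avoids the change-of-rings spectral sequence (or its five-term exact sequence) and makes the compatibility with multiplication maps transparent; the paper's route is more in the standard homological idiom and would extend more readily to higher $\Tor$ if needed. Substantively the two arguments encode the same vanishing, since $\Tor_1^R(R/N,R/I)=0$ is precisely what guarantees that your identification $I\otimes_R\bar R\cong I/N$ has no hidden kernel.
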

\begin{proof}  For the ``only if'' part, let $\overline{I}=I/\Nil(R)$ be an ideal of $\overline{R}=R/\Nil(R)$. If $\overline{I}$ is  zero,  certainly $\Tor_1^{\overline{R}}(\overline{R}/\overline{I}, M/\Nil(R)M)=0$.  Let $\overline{I}$ be a non-zero ideal of $\overline{R}$ with $I\in \NN(R)$. Since $M/\Nil(R)M$ is $\phi$-flat over $R$, $\Tor_1^R(R/I,M/\Nil(R)M)=0$.  By Lemma \ref{nin},  $\Tor_1^R(R/\Nil(R),R/I)\cong I\cap \Nil(R)/I\Nil(R)= \Nil(R)/I\Nil(R)=0$. We have $\Tor_1^{\overline{R}}(\overline{R}/\overline{I}, M/\Nil(R)M)\cong\Tor_1^R(R/I, M/\Nil(R)M)=0$ by change of rings.

For the ``if'' part, let $I$ be a nonnil ideal of $R$. Similarly to the proof of ``only if'' part, since $\Tor_1^R(R/\Nil(R),R/I)=0$, we have $\Tor_1^R(R/I, M/\Nil(R)M)\cong\Tor_1^{\overline{R}}(\overline{R}/\overline{I}, M/\Nil(R)M)=0$. It follows that $M/\Nil(R)M$ is $\phi$-flat over $R$.
\end{proof}

By localizing at all maximal $w$-ideals, we obtain the following corollary.
\begin{corollary}\label{phi-flat-w}
Let $R$ be a $\phi$-ring and $M$ an $R$-module. Then $ M/\Nil(R)M$ is $\phi$-$w$-flat over $R$ if and only if $M/\Nil(R)M$ is $w$-flat over $R/\Nil(R)$.
\end{corollary}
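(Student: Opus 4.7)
The strategy is to reduce both sides of the claimed equivalence to statements about ordinary $\phi$-flatness / flatness at localizations at maximal $w$-ideals, and then invoke Proposition \ref{phi-flat-nil} locally.

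First, by Theorem \ref{w-phi-flat}(2), $M/\Nil(R)M$ is $\phi$-$w$-flat over $R$ if and only if $(M/\Nil(R)M)_{\fkm}$ is $\phi$-flat over $R_{\fkm}$ for every $\fkm \in w\text{-}\Max(R)$. Similarly, applying the $w$-flat analogue (obtained by localizing the standard flatness characterization at all maximal $w$-ideals, cf.\ \cite{WK15,KW14}) to the ring $R/\Nil(R)$, $M/\Nil(R)M$ is $w$-flat over $R/\Nil(R)$ if and only if $(M/\Nil(R)M)_{\overline{\fkm}}$ is flat over $(R/\Nil(R))_{\overline{\fkm}}$ for every $\overline{\fkm} \in w\text{-}\Max(R/\Nil(R))$.

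Second, I would match these two families of localizations. Since $R$ is a $\phi$-ring, $\Nil(R)$ is a divided prime, hence contained in every prime ideal of $R$, and in particular in every maximal $w$-ideal; so the correspondence $\fkm \mapsto \overline{\fkm} = \fkm/\Nil(R)$ gives a bijection between $w\text{-}\Max(R)$ and $w\text{-}\Max(R/\Nil(R))$. For any such $\fkm$ we have the canonical identifications
\begin{equation*}
(R/\Nil(R))_{\overline{\fkm}} \;\cong\; R_{\fkm}/\Nil(R)R_{\fkm} \;=\; R_{\fkm}/\Nil(R_{\fkm}),
\end{equation*}
and, in the module slot,
\begin{equation*}
(M/\Nil(R)M)_{\fkm} \;\cong\; M_{\fkm}/\Nil(R_{\fkm})M_{\fkm}.
\end{equation*}
The last equality on the ring side uses the well-known fact that localizations of $\phi$-rings are $\phi$-rings, so $\Nil(R_{\fkm}) = \Nil(R)R_{\fkm}$.

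Third, I would apply Proposition \ref{phi-flat-nil} with the $\phi$-ring $R_{\fkm}$ in place of $R$ and the module $M_{\fkm}$ in place of $M$. This yields that $M_{\fkm}/\Nil(R_{\fkm})M_{\fkm}$ is $\phi$-flat over $R_{\fkm}$ if and only if it is flat over $R_{\fkm}/\Nil(R_{\fkm})$. Combining this equivalence for each $\fkm$ with the bijection and isomorphisms of the previous step gives the desired biconditional.

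The main obstacle is purely bookkeeping: one must make sure that the bijection $w\text{-}\Max(R) \leftrightarrow w\text{-}\Max(R/\Nil(R))$ is set up correctly and that the localization identities above are valid in the $\phi$-setting, but once these are in place the argument is a straightforward concatenation of Theorem \ref{w-phi-flat} and Proposition \ref{phi-flat-nil}.
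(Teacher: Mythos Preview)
Your approach coincides with the paper's: localize at maximal $w$-ideals, invoke Proposition~\ref{phi-flat-nil} over each $R_{\fkm}$, and combine Theorem~\ref{w-phi-flat} with the local characterization of $w$-flatness (the paper cites \cite[Theorem~3.3]{fk14} for the latter). The only soft spot is your justification of the bijection $w\text{-}\Max(R)\leftrightarrow w\text{-}\Max(R/\Nil(R))$: the divided-prime property yields a bijection on $\mathrm{Spec}$, but matching maximal $w$-ideals requires that $\GV$-ideals (hence $w$-ideals) correspond under $R\to R/\Nil(R)$, which does not follow from $\Nil(R)\subseteq\fkm$ alone and is exactly what the external citation is meant to supply.
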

\begin{proof}
See Proposition \ref{phi-flat-nil},   Theorem \ref{w-phi-flat} and \cite[Theorem 3.3]{fk14}.
\end{proof}
Certainly if $R$ is an integral domain, every $\phi$-$w$-flat module is $w$-flat. Conversely, this property characterizes integral domains.
\begin{theorem}\label{phi-flat}
The following statements are equivalent for a $\phi$-ring $R$:
\begin{enumerate}
    \item $R$ is an integral domain;
    \item  every $\phi$-$w$-flat module is $w$-flat;
    \item  every $\phi$-flat module is $w$-flat.
\end{enumerate}
\end{theorem}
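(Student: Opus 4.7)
The plan is to close the cycle $(1)\Rightarrow(2)\Rightarrow(3)\Rightarrow(1)$. The step $(2)\Rightarrow(3)$ is immediate, because every $\phi$-flat module is already $\phi$-$w$-flat by the remark following Definition \ref{w-phi-flat }. For $(1)\Rightarrow(2)$ I would use Theorem \ref{w-phi-flat} to reduce $\phi$-$w$-flatness of $M$ to local $\phi$-flatness of each $M_{\fkm}$ at $\fkm\in w\mbox{-}\Max(R)$; since $R_{\fkm}$ is then a domain, its nonnil ideals coincide with its nonzero ideals, and the Tor characterization of $\phi$-flatness collapses to the ordinary Tor characterization of flatness. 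Hence $M_{\fkm}$ is flat for every such $\fkm$, and the local description of $w$-flatness at maximal $w$-ideals (see \cite[Theorem 3.3]{fk14}) yields that $M$ is $w$-flat.

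The substance of the theorem is $(3)\Rightarrow(1)$. By Proposition \ref{phi-flat-nil}, $R/\Nil(R)$ is $\phi$-flat over $R$, so by hypothesis it is $w$-flat. Localizing at $\fkm\in w\mbox{-}\Max(R)$ and using the $\phi$-ring identity $\Nil(R)_{\fkm}=\Nil(R_{\fkm})$ (which follows from $\Nil(R)$ being a prime ideal contained in $\fkm$), I obtain that the cyclic quotient $R_{\fkm}/\Nil(R_{\fkm})$ is flat over the local ring $R_{\fkm}$. The plan is then to deduce $\Nil(R_{\fkm})=0$ for every $\fkm$, and to globalize using $\GV$-torsion freeness of $R$.

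For the local step, I would apply the equational criterion of flatness to an element $x\in\Nil(R_{\fkm})$: the relation $x\cdot\overline{1}=0$ in the flat module $R_{\fkm}/\Nil(R_{\fkm})$ must be witnessed by $a_1,\dots,a_n\in R_{\fkm}$ and $\overline{y}_1,\dots,\overline{y}_n$ with $\sum a_i\overline{y}_i=\overline{1}$ and $xa_i=0$. Lifting, $\sum a_iy_i=1+n$ with $n\in\Nil(R_{\fkm})\subseteq\fkm R_{\fkm}$, so $1+n$ is a unit in the local ring; multiplying through, $x(1+n)=\sum(xa_i)y_i=0$, hence $x=0$. For the global step, given $x\in\Nil(R)$ this local vanishing gives $\ann_R(x)\nsubseteq\fkm$ for every $\fkm\in w\mbox{-}\Max(R)$, so $\ann_R(x)$ contains a $\GV$-ideal $J$; since $R$ is $\GV$-torsion free, $Jx=0$ forces $x=0$, and $\Nil(R)=0$ makes $R$ a domain. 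The main obstacle is precisely this local step: because $\Nil(R_{\fkm})$ need not be finitely generated, Nakayama's lemma is not directly available to kill the cyclic flat quotient, and the equational criterion of flatness is the correct substitute.
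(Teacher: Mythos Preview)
Your proof is correct. For $(1)\Rightarrow(2)$ you take a slightly longer path than necessary: once $R$ is a domain, the nonnil ideals are exactly the nonzero ideals, so the Tor criterion in Theorem \ref{w-phi-flat}(4) already coincides verbatim with the Tor criterion for $w$-flatness, without any passage to $R_\fkm$.

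For the substantive implication $(3)\Rightarrow(1)$ your route genuinely differs from the paper's. The paper stays global: for a nilpotent $s$ it computes $\Tor_1^R(R/(s),R/\Nil(R))\cong (s)/s\Nil(R)$, which is $\GV$-torsion by $w$-flatness of $R/\Nil(R)$; this yields a $\GV$-ideal $J$ with $sJ\subseteq s\Nil(R)$, and Lemma \ref{nin} then gives $sJ=sJ\Nil(R)$. Because $\GV$-ideals are finitely generated, so is $sJ$, and Nakayama kills it directly---so the finite-generation obstacle you flagged is circumvented by applying Nakayama to $sJ$ rather than to $\Nil(R)$. Your argument instead localizes, turning $w$-flatness into honest flatness of the cyclic module $R_\fkm/\Nil(R_\fkm)$, and extracts $\Nil(R_\fkm)=0$ via the equational criterion (equivalently: a nonzero cyclic flat module over a local ring is free of rank one), before globalizing through the same $\GV$-torsion-freeness of $R$. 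Both arguments are clean; the paper's is shorter and never leaves $R$, while yours is more transparent about the underlying local flatness phenomenon and avoids the ad hoc use of Lemma \ref{nin}.
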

\begin{proof} $(1)\Rightarrow (2)\Rightarrow (3)$: Trivial.

$(3)\Rightarrow (1)$:   Let $s$ be a nilpotent element of $R$. Then $$\Tor_1^R(R/(s),R/\Nil(R))\cong(s)\cap\Nil(R)/s\Nil(R)=(s)/s\Nil(R)$$ is $\GV$-torsion since $R/\Nil(R)$ is $w$-flat by (3) and Proposition \ref{phi-flat-nil}. Thus there is a $\GV$-ideal $J$ such that $sJ\subseteq s\Nil(R)$. Since $J$ is a nonnil ideal, $\Nil(R)=J\Nil(R)$ by Lemma \ref{nin}. Thus $sJ\subseteq s\Nil(R)=sJ\Nil(R)\subseteq sJ$. That is, $sJ=sJ\Nil(R)$.  Since $sJ$ is finitely generated, $sJ=0$ by Nakayama's lemma. Since $J\in \GV(R)$, $s\in R$ is $\GV$-torsion free, then $s=0$. Consequently, $\Nil(R)=0$ and $R$ is an integral domain.
\end{proof}

Recall from \cite{fk10} that a ring $R$ is said to be a \emph{$\DW$ ring} if every ideal of $R$ is a $w$-ideal. Then a ring $R$ is a $\DW$
ring if and only if every $R$-module is a $w$-module, if and only if $\GV(R) = \{R\}$ (see \cite[Theorem 3.8]{fk10}).
Certainly if $R$ is a $\DW$ ring, every $\phi$-$w$-flat module is $\phi$-flat. Conversely, this property characterizes $\DW$ rings.
\begin{theorem}\label{w-flat}
 The following statements are equivalent for an $\NP$-ring $R$:
\begin{enumerate}
    \item $R$ is a $\DW$ ring;
    \item  every $\phi$-$w$-flat module is $\phi$-flat;
    \item  every $w$-flat module is $\phi$-flat.
\end{enumerate}
\end{theorem}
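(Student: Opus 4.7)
The proof plan is to establish the three implications in the order $(1)\Rightarrow(2)\Rightarrow(3)\Rightarrow(1)$, with only the last being nontrivial.

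For $(1)\Rightarrow(2)$: If $R$ is a $\DW$ ring, then $\GV(R)=\{R\}$, so every $\GV$-torsion $R$-module is zero. Consequently, a $w$-monomorphism of $R$-modules (a map with $\GV$-torsion kernel) is simply a monomorphism, and the defining condition for $\phi$-$w$-flat in Definition \ref{w-phi-flat } collapses to the defining condition for $\phi$-flat. For $(2)\Rightarrow(3)$: Every ordinary monomorphism is a $w$-monomorphism (its kernel is $0$, hence $\GV$-torsion), so any $w$-flat module is automatically $\phi$-$w$-flat; (2) then forces it to be $\phi$-flat.

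The main obstacle is $(3)\Rightarrow(1)$. The idea is to feed the hypothesis a module that is clearly $w$-flat but only $\phi$-flat when the underlying ring is $\DW$. For any $J\in\GV(R)$, the module $R/J$ is $\GV$-torsion, hence $w$-flat. By (3), $R/J$ is $\phi$-flat, so by \cite[Theorem 3.2]{ZWT13} we have $\Tor_1^R(R/I,R/J)=0$ for every nonnil ideal $I$. Using the standard identification $\Tor_1^R(R/I,R/J)\cong(I\cap J)/IJ$ and specializing to $I=J$ (which is nonnil since $\GV$-ideals are nonnil, as observed at the start of Section 1), we obtain $J=J^2$.

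Since $J$ is finitely generated, the determinant form of Nakayama's lemma produces an element $e\in J$ with $(1-e)J=0$. But $R$ is $\GV$-torsion free as an $R$-module: the isomorphism $R\cong\Hom_R(J,R)$ sending $r$ to multiplication by $r$ is injective, so $(1-e)J=0$ forces $1-e=0$, giving $1=e\in J$ and hence $J=R$. Therefore $\GV(R)=\{R\}$, which by \cite[Theorem 3.8]{fk10} means $R$ is a $\DW$ ring. The only subtle point in the argument is invoking Nakayama in its determinant-trick form (no Jacobson radical hypothesis is needed) and then using $\GV$-torsion freeness of $R$ itself to kill $1-e$; everything else is a direct appeal to results already cited in the paper.
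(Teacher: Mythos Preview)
Your proof is correct and follows essentially the same strategy as the paper's: for $(3)\Rightarrow(1)$ both arguments take $J\in\GV(R)$, use that $R/J$ is $\GV$-torsion (hence $w$-flat, hence $\phi$-flat by hypothesis), and apply the $\phi$-flat criterion with the nonnil ideal $I=J$ to obtain $J=J^2$. The only difference is in the endgame: the paper deduces from $J=J^2$ that $J$ is projective, hence a $w$-module (so $J=J_w$), and combines this with $J_w=R$ for $\GV$-ideals; you instead invoke the determinant form of Nakayama to get $(1-e)J=0$ with $e\in J$ and then use $\GV$-torsion freeness of $R$ directly to kill $1-e$. Your route is slightly more self-contained (it avoids the external references to \cite{fk16}), while the paper's route packages the same computation into the standard fact ``finitely generated idempotent ideals are generated by an idempotent, hence projective.'' Substantively the two arguments are the same.
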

\begin{proof} $(1)\Rightarrow (2)\Rightarrow (3)$: Trivial.

$(3)\Rightarrow (1)$: For any $J\in \GV(R)$, $R/J$ is $\GV$-torsion, and thus $w$-flat. By (3), $R/J$ is $\phi$-flat. Since every $\GV$-ideal  $J$ is a nonnil ideal of $R$, we have $\Tor_1^R(R/J,R/J)\cong J/J^2=0$. It follows that $J$ is a finitely generated idempotent ideal,  and thus $J$ is projective. So $J=J_w=R$ by \cite[Exercise 6.10(1)]{fk16} and thus $R$ is a $\DW$ ring by \cite[Theorem 6.3.12]{fk16}.
\end{proof}

Some non-integral domain examples  are provided by the idealization construction $R(+)M$ where $M$ is an $R$-module (see \cite{H88}). We recall this construction. Let  $R(+)M=R\oplus M$ as an $R$-module, and define
\begin{enumerate}
    \item ($r,m$)+($s,n$)=($r+s,m+n$).
    \item  ($r,m$)($s,n$)=($rs,sm+rn$).
\end{enumerate}
Under these definitions, $R(+)M$ becomes a commutative ring with identity. Denote by $(0:_RM)$ the set $\{r\in R| rM=0\}$. Now we compute some examples of $\GV$-ideals of $R(+)M$.
\begin{proposition}\label{ideal}
Let $T$ be a commutative ring and $E$ a  $w$-module over $T$ such that $(0:_TE)=0$. Set $R=T(+)E$. Then $J(+)E$ is a  $\GV$-ideal of $R$ for any $J\in \GV(T)$.
\end{proposition}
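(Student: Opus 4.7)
I would proceed by verifying the two defining properties of a $\GV$-ideal directly for $I := J(+)E$. Fix generators $J = (j_1, \ldots, j_n)$. For finite generation of $I$ as an $R$-ideal: the lifts $(j_i, 0)$ together with elements $(0, e_k)$ coming from a generating family of $E$ over $T$ generate $I$, since $(r, m)(j_i, 0) = (rj_i, mj_i)$ recovers all of $J(+)JE$ while the $(0, e_k)$ fill in $0(+)E$. For injectivity of the natural map $\eta: R \to \Hom_R(I, R)$: if $(a, b) \in \ker \eta$, evaluation at $(j, 0)$ gives $(aj, bj) = 0$ for all $j \in J$, hence $aJ = 0$ in $T$ and $bJ = 0$ in $E$; since $T$ is $\GV$-torsion free (because $T \to \Hom_T(J, T)$ is an isomorphism) and $E$ is $\GV$-torsion free (being a $w$-module), we conclude $a = 0 = b$.

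The substantive content is surjectivity. Given $\varphi: I \to R$, write $\varphi(x, y) = (\alpha(x, y), \beta(x, y))$. First, applying $\varphi$ to the identity $(0, m)(0, e) = (0, 0)$ and using $R$-linearity yields $(0, m)\varphi(0, e) = (0, \alpha(0, e)\, m) = (0, 0)$ for all $m \in E$, so $\alpha(0, e) \cdot E = 0$, and the hypothesis $(0 :_T E) = 0$ forces $\alpha(0, e) = 0$. Next, the $T$-linear restriction $\alpha_1 := \alpha(-, 0) \colon J \to T$ lies in $\Hom_T(J, T) \cong T$ (by $J \in \GV(T)$), so $\alpha_1(j) = aj$ for some $a \in T$; similarly, $\beta_1 := \beta(-, 0) \colon J \to E$ lies in $\Hom_T(J, E) \cong E$ (since $E$ is a $w$-module and $J$ is a $\GV$-ideal), so $\beta_1(j) = jb$ for some $b \in E$.

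It remains to identify $\beta_0 := \beta(0, -) \colon E \to E$ with multiplication by $a$. Applying $\varphi$ to $(0, e)(j, 0) = (0, je)$ yields $(0, \beta_0(je)) = (0, e)(aj, jb) = (0, aje)$, so $\beta_0$ agrees with multiplication by $a$ on the submodule $JE$. For arbitrary $e \in E$, $T$-linearity of $\beta_0$ combined with $\beta_0(je) = aje$ shows that the difference $\beta_0(e) - ae$ is annihilated by $J$; the $\GV$-torsion freeness of the $w$-module $E$ then forces $\beta_0(e) = ae$. Assembling the pieces, $\varphi(j, e) = \varphi(j, 0) + \varphi(0, e) = (aj, jb) + (0, ae) = (a, b)(j, e)$, so $\varphi = \eta(a, b)$. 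The main obstacle is precisely this last step: propagating the identity $\beta_0 = a \cdot \mathrm{id}_E$ from $JE$ to all of $E$ is where the $w$-module hypothesis on $E$ enters in an essential way.
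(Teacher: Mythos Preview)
Your argument for finite generation of $I = J(+)E$ is incomplete: a generating family $\{e_k\}$ for $E$ over $T$ need not be finite, so the generating set $\{(j_i,0)\}\cup\{(0,e_k)\}$ you produce for $I$ need not be finite. In fact $J(+)E$ is finitely generated over $R$ if and only if $E/JE$ is finitely generated over $T$ (since the $R$-module $(J(+)E)/(J(+)JE)\cong E/JE$ is acted on through $R\twoheadrightarrow T$), and nothing in the stated hypotheses forces this. The paper's own proof does not address finite generation either; in the only application (Example~\ref{w-phi-flat-e}) one has $E=K$ the quotient field of a domain, so $JE=E$ and the lifts $(j_1,0),\dots,(j_n,0)$ already generate $J(+)E$.

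Setting that aside, your verification that $\eta\colon R\to\Hom_R(I,R)$ is bijective is correct, and the route is genuinely different from the paper's. The paper observes $R/I\cong T/J$, invokes a change-of-rings isomorphism $\Ext_R^i(T/J,R)\cong\Ext_T^i\bigl(T/J,\Hom_R(T,R)\bigr)$ for $i=0,1$, computes $\Hom_R(T,R)\cong 0(+)E\cong E$ using $(0:_TE)=0$, and concludes from $E$ being a $w$-module. Your elementwise decomposition of an arbitrary $\varphi$ into the $T$-linear pieces $\alpha_1,\beta_1,\beta_0$ carries out the same algebra by hand: the identifications $\Hom_T(J,T)\cong T$ and $\Hom_T(J,E)\cong E$, together with the step propagating $\beta_0|_{JE}=a\cdot\mathrm{id}$ to $\beta_0=a\cdot\mathrm{id}$ via $\GV$-torsion-freeness, are exactly where the hypotheses $J\in\GV(T)$ and ``$E$ is a $w$-module'' do their work, paralleling the vanishing of $\Hom_T(T/J,E)$ and $\Ext_T^1(T/J,E)$ in the paper's argument. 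The paper's approach is more compact; yours has the merit of being fully self-contained and not relying on the change-of-rings identity for $\Ext^1$, whose justification in this setting is itself not entirely immediate.
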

\begin{proof}  Let $J$ be a $\GV$-ideal of $T$. Then we claim that $J(+)E\in \GV(R)$. Indeed, since  $T(+)E/J(+)E\cong T/J$, for any $i=0,1$, we have
$$\Ext_{R}^i(T(+)E/J(+)E,R)\cong \Ext_{T}^i(T/J,\Hom_{R}(T,R)).$$   Note that $$\Hom_{R}(T,R)=\Hom_{R}(R/0(+)E,R)\cong 0(+)E \cong  E$$ since $(0:_TE)=0$.
Thus $\Ext_{R}^i(T(+)E/J(+)E,R)\cong \Ext_{T}^i(T/J, E)$ for any $i=0,1$. If $J\in \GV(T)$ then $J(+)E\in \GV(R)$ since $E$ is a $w$-module over $T$.
\end{proof}

Now we give an example to show the notion of $\phi$-$w$-flat modules is a strict generalization of  $\phi$-flat modules and $w$-flat modules.
\begin{example}\label{w-phi-flat-e} Let $D$  be a non-$\DW$ integral domain and $K$ its quotient field. Then $R=D(+)K$ is a $\phi$-ring (see \cite[Remark 1]{FA05}). However, by Proposition \ref{ideal}, $R$ is neither an integral domain nor a $\DW$ ring. Consequently, there is a $\phi$-$w$-flat module over $R$ which is neither  $\phi$-flat nor $w$-flat by Theorem \ref{phi-flat} and Theorem \ref{w-flat}.
\end{example}

\section{homological properties of $\phi$-$w$-flat modules}

Let $R$ be a ring. It is well known that the flat dimension of an $R$-module $M$ is defined as the shortest flat resolution of $M$ and the weak global dimension of $R$ is the supremum of the flat dimensions  of all $R$-modules. The $w$-flat dimension $w$-fd$_R(M)$ of an $R$-module $M$ and $w$-weak global dimension $w$-w.gl.dim$(R)$ of a ring $R$ were introduced and studied in \cite{fq15}.  The following result shows that $\phi$-$w$-flat modules can be characterized by ``higher Tor-funcotrs'' when the base ring is a strongly $\phi$-ring, and so we can investigate homological dimensions in terms of  $\phi$-$w$-flat modules over strongly $\phi$-rings.

\begin{lemma}\label{here}\cite[Theorem 1.6]{QZZ}
Let the  $R$ be  a strongly $\phi$-ring.  Then  an $R$-module $M$ is  $\phi$-$w$-flat if and only if $\Tor^R_{k}(R/I,M)$ is $\GV$-torsion for all nonnil ideal $I$ of $R$ and all $k > 0$
\end{lemma}
\begin{proof} Let $I$ be a nonnil ideal of $R$. Since $R$ is a strongly $\phi$-ring, then $I$ contains a nonzero-divisor $a$. Suppose $M$ is a $\phi$-$w$-flat $R$-module.  Since $a$ is a non-zero-divisor of $R$, $\Tor_n^R(R/\langle a\rangle,M)=0$ for any positive integer $n$.  Then $$\Tor_1^{R/\langle a\rangle}(R/I,M/aM)\cong \Tor_1^{R/\langle a\rangle}(R/I,M\otimes_RR/\langle a\rangle)\cong \Tor_1^{R}(R/I,M)$$ which is $\GV$-torsion. Hence $M/Ma$ is a $w$-flat $R/\langle a\rangle$-module. Consequently, for any $n\geq 1$ we have $$\Tor_n^R(R/I,M)\cong \Tor_n^{R/\langle a\rangle}(R/I,M\otimes_RR/\langle a\rangle)\cong \Tor_n^{R/\langle a\rangle}(R/I,M/aM)$$ is $\GV$-torsion by \cite[Theorem 6.7.2]{fk16}.
\end{proof}

In this section, we always assume $R$ is a strongly $\phi$-ring. We first introduce the notion of $\phi$-$w$-flat dimension of an $R$-module over strongly $\phi$-ring $R$ as follows.

\begin{definition}\label{w-phi-flat }
Let $R$ be a strongly $\phi$-ring and $M$ an $R$-module. We write $\phi$-$w$-fd$_R(M)\leq n$  ($\phi$-$w$-fd abbreviates  \emph{$\phi$-$w$-flat dimension}) if there is a $w$-exact sequence of $R$-modules
$$ 0 \rightarrow F_n \rightarrow ...\rightarrow F_1\rightarrow F_0\rightarrow M\rightarrow 0   \ \ \ \ \ \ \ \ \ \ \ \ \ \ \ \ \ \ \ \ \ \ \ \ \ \ \ \ \ \ \ \ \ \ \ \ \ \ \ (\diamondsuit)$$
where each $F_i$ is  $w$-flat for $i=0,...,n-1$ and $F_n$ is $\phi$-$w$-flat. The $w$-exact sequence $(\diamondsuit)$ is said to be  a $\phi$-$w$-flat $w$-resolution of length $n$ of $M$. If such finite $w$-resolution does not exist, then we say $\phi$-$w$-fd$_R(M)=\infty$; otherwise,  define $\phi$-$w$-fd$_R(M) = n$ if $n$ is the length of the shortest $\phi$-$w$-flat $w$-resolution of $M$.
\end{definition}\label{def-wML}
It is obvious that an $R$-module $M$ is $\phi$-$w$-flat if and only if $\phi$-$w$-fd$_R(M)= 0$. Certainly, $\phi$-$w$-fd$_R(M)\leq w$-fd$_R(M)$. If $R$ is an integral domain, then $\phi$-$w$-fd$_R(M)= w$-fd$_R(M)$

\begin{lemma}\cite[Lemma 2.2]{fq15}\label{big-Tor}
Let $N$ be an $R$-module and $0 \ra A \ra F \ra C \ra 0$ a $w$-exact
sequence of $R$-modules with $F$ a $w$-flat module. Then for any $n > 0$, the induced
map $\Tor^R_{n+1}(C, N) \ra \Tor^R_n(A, N)$ is a $w$-isomorphism. Hence, $\Tor^R_{n+1}(C, N)$
is $\GV$-torsion if and only if so is $\Tor^R_n(A, N)$.
\end{lemma}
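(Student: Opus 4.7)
The plan is to split the $w$-exact sequence into one honest short exact sequence bracketed by two $w$-isomorphisms, apply the classical long exact $\Tor$ sequence, and exploit throughout that both being $\GV$-torsion and being $w$-flat can be detected after localizing at each maximal $w$-ideal.

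Two preliminary observations, both immediate from the fact that $\Tor$ commutes with localization and that $M$ is $\GV$-torsion if and only if $M_{\fkm}=0$ for every $\fkm\in w\mbox{-}\Max(R)$: \emph{(a)} if $T$ is $\GV$-torsion then $\Tor^R_k(T,N)$ is $\GV$-torsion for every $k\geq 0$; \emph{(b)} if $G$ is $w$-flat then $G_{\fkm}$ is flat over $R_{\fkm}$ for every $\fkm$ (the $w$-flat analogue of (1)$\Leftrightarrow$(2) of Theorem~\ref{w-phi-flat}), so $\Tor^R_k(G,N)$ is $\GV$-torsion for every $k\geq 1$. Combining (a) with the long exact $\Tor$ sequence yields the standing fact that any $w$-isomorphism $X\to Y$ of $R$-modules induces a $w$-isomorphism $\Tor^R_k(X,N)\to \Tor^R_k(Y,N)$ for every $k$.

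Write $\alpha:A\to F$ and $\beta:F\to C$ for the two maps; the $w$-exactness assumption unpacks to saying that $\Ker\alpha$, $\Ker\beta/\alpha(A)$ and $\Coker\beta$ are $\GV$-torsion (and $\beta\alpha=0$). From the honest short exact sequence $0\to \Ker\beta\to F\to F/\Ker\beta\to 0$ and observation (b) applied to $F$, the classical long exact $\Tor$ sequence gives a $w$-isomorphism $\Tor^R_{n+1}(F/\Ker\beta,N)\to \Tor^R_n(\Ker\beta,N)$ for every $n\geq 1$. The corestriction $A\to \Ker\beta$ of $\alpha$ has kernel $\Ker\alpha$ and cokernel $\Ker\beta/\alpha(A)$, and the factorisation $F/\Ker\beta\to C$ of $\beta$ has zero kernel and cokernel $\Coker\beta$; both are therefore $w$-isomorphisms, and by the standing fact they induce $w$-isomorphisms on all $\Tor$ groups. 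Splicing the three $w$-isomorphisms produces the asserted $w$-isomorphism $\Tor^R_{n+1}(C,N)\to \Tor^R_n(A,N)$. The ``hence'' clause is immediate: a $w$-isomorphism has $\GV$-torsion kernel and cokernel, and $\GV$-torsion is closed under extensions.

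The main obstacle I anticipate is not the mathematics itself but identifying the splicing above with ``the'' induced map of the statement, since two of the three $w$-isomorphisms point the wrong way and the composition is really a derived-category manoeuvre rather than an honest abelian-category one. The cleanest bypass is to localize everything at each $\fkm\in w\mbox{-}\Max(R)$ from the outset: there the $w$-exact sequence becomes an honest short exact sequence whose middle term is flat over $R_{\fkm}$, producing the classical Tor connecting isomorphism, after which the compatibility of $\Tor$ with localization together with the local nature of $w$-isomorphism returns the conclusion globally.
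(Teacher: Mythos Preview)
The paper does not prove this lemma; it is quoted from \cite[Lemma~2.2]{fq15} and used as a black box, so there is no in-paper argument to compare against.

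Your reasoning is correct, and your instinct to fall back on the localization argument in the last paragraph is the right one. Every subsequent use of the lemma in the paper (Propositions~\ref{w-phi-flat d} and~\ref{w-g-flat}) invokes only the ``hence'' clause, and for that your localization argument is clean and complete: at each $\fkm\in w\mbox{-}\Max(R)$ the sequence becomes honestly short exact with $F_\fkm$ flat over $R_\fkm$, so the classical connecting map $\Tor^{R_\fkm}_{n+1}(C_\fkm,N_\fkm)\to\Tor^{R_\fkm}_n(A_\fkm,N_\fkm)$ is an isomorphism, and since $\Tor$ commutes with localization one side vanishes at every $\fkm$ if and only if the other does.

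One small correction and one affirmation. The correction: under the paper's definition of $w$-exactness (exactness after localizing at every maximal $w$-ideal), the composite $\beta\alpha$ need not vanish globally---only $\Im(\beta\alpha)$ is forced to be $\GV$-torsion---so your parenthetical ``and $\beta\alpha=0$'' is not quite part of the unpacking. This means the corestriction $A\to\Ker\beta$ in your middle paragraph may not exist as written. The affirmation: this only reinforces the point you already make in your final paragraph. There is in general no canonical global $R$-linear map $\Tor^R_{n+1}(C,N)\to\Tor^R_n(A,N)$ here; the phrase ``the induced map'' in the statement is really shorthand for the compatible family of connecting maps one obtains after localizing, and the assertion that it is a $w$-isomorphism is the assertion that each of those local maps is an isomorphism. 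Your localization bypass handles exactly this, so nothing is lost.
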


\begin{proposition}\label{w-phi-flat d}
Let $R$ be a strongly $\phi$-ring. The following statements are equivalent for an $R$-module $M$:
\begin{enumerate}
    \item $\phi$-$w$-fd$_R(M)\leq n$;
    \item $\Tor^R_{n+k}(M, N)$ is $\GV$-torsion for all $\phi$-torsion $R$-modules $N$ and all $k > 0$;
    \item $\Tor^R_{n+1}(M, N)$ is $\GV$-torsion for all $\phi$-torsion $R$-modules $N$;
        \item $\Tor^R_{n+1}(M, R/I)$ is $\GV$-torsion for all nonnil ideals $I$;
    \item $\Tor^R_{n+1}(M, R/I)$ is $\GV$-torsion for all finite type nonnil ideals $I$;
    \item if $0 \rightarrow F_n \rightarrow ...\rightarrow F_1\rightarrow F_0\rightarrow M\rightarrow 0$ is an exact sequence,
where $F_0, F_1, . . . , F_{n-1}$ are flat $R$-modules, then $F_n$ is $\phi$-$w$-flat;
    \item if $0 \rightarrow F_n \rightarrow ...\rightarrow F_1\rightarrow F_0\rightarrow M\rightarrow 0$ is an $w$-exact sequence,
where $F_0, F_1, . . . , F_{n-1}$ are $w$-flat $R$-modules, then $F_n$ is $\phi$-$w$-flat;
    \item if $0 \rightarrow F_n \rightarrow ...\rightarrow F_1\rightarrow F_0\rightarrow M\rightarrow 0$ is an exact sequence,
where $F_0, F_1, . . . , F_{n-1}$ are $w$-flat $R$-modules, then $F_n$ is $\phi$-$w$-flat;
    \item if $0 \rightarrow F_n \rightarrow ...\rightarrow F_1\rightarrow F_0\rightarrow M\rightarrow 0$ is an $w$-exact sequence,
where $F_0, F_1, . . . , F_{n-1}$ are flat $R$-modules, then $F_n$ is $\phi$-$w$-flat.
\end{enumerate}
\end{proposition}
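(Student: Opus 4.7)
The plan is to split the nine conditions into three clusters---the defining condition (1), the $\Tor$-vanishing tests (2)--(5), and the resolution tests (6)--(9)---and prove a cycle of implications passing through each cluster. The existing Theorem \ref{w-phi-flat} (characterizing $\phi$-$w$-flatness via $\Tor_{1}$ with nonnil cyclic quotients) together with Lemma \ref{big-Tor} (the $w$-shift across $w$-flat middle terms) will do most of the heavy lifting.

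For the $\Tor$-cluster, $(2)\Rightarrow(3)\Rightarrow(4)\Rightarrow(5)$ are trivial restrictions. For $(5)\Rightarrow(4)$, I would write a nonnil ideal $I$ as the filtered union of its finitely generated subideals that contain a fixed non-nilpotent element of $I$; each such subideal is finite type and nonnil, so $(5)$ applies, and commutation of $\Tor$ with filtered colimits together with the fact that $\GV$-torsion is preserved by filtered colimits yields $(4)$. For $(4)\Rightarrow(3)$, any $\phi$-torsion $N$ is the filtered union of its finitely generated $\phi$-torsion submodules; using that $\NN(R)$ is multiplicative in the $\NP$-ring $R$, such a finitely generated submodule is annihilated by a single nonnil ideal, hence admits a finite filtration with cyclic quotients $R/J_{i}$ each with $J_{i}$ nonnil, and a d\'evissage via the long exact sequence of $\Tor$ (using closure of $\GV$-torsion under extensions) delivers $(3)$.

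To link the syzygy-cluster to (1) and (5), I would take any resolution as in (6)--(9), chop it into short $w$-exact (or exact) pieces $0\to K_{i+1}\to F_{i}\to K_{i}\to 0$, and iterate Lemma \ref{big-Tor} across the $w$-flat middle terms to obtain a chain of $w$-isomorphisms
\[
\Tor^{R}_{n+1}(M,N)\;\cong_{w}\;\Tor^{R}_{n}(K_{1},N)\;\cong_{w}\;\cdots\;\cong_{w}\;\Tor^{R}_{1}(F_{n},N).
\]
Combined with Theorem \ref{w-phi-flat}, this says ``$F_{n}$ is $\phi$-$w$-flat'' if and only if ``$\Tor^{R}_{n+1}(M,R/I)$ is $\GV$-torsion for every (finite type) nonnil $I$,'' giving $(5)\Leftrightarrow$ each of (6)--(9). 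Taking a free resolution and invoking (6) then gives $(5)\Leftrightarrow(1)$.

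The main obstacle will be $(1)\Rightarrow(2)$, i.e.~upgrading $\Tor$-vanishing at $n+1$ to all $n+k$ with $k\geq 1$. Starting from the $\phi$-$w$-flat $w$-resolution of (1) and iterating Lemma \ref{big-Tor} yields the $w$-isomorphism $\Tor^{R}_{n+k}(M,N)\cong_{w}\Tor^{R}_{k}(F_{n},N)$; the $k=1$ case is immediate from Theorem \ref{w-phi-flat}. For $k\geq 2$, my plan is to localize at each $\fkm\in w\text{-}\Max(R)$, reducing (via Theorem \ref{w-phi-flat} and Proposition \ref{w-phi-tor}) to the following local claim over the $\NP$-ring $R_{\fkm}$: if $F$ is $\phi$-flat and $N$ is $\phi$-torsion, then $\Tor^{R_{\fkm}}_{k}(F,N)=0$ for all $k\geq 1$. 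I would attack this by resolving $N$ inside the hereditary torsion class of $\phi$-torsion modules---use the canonical surjection $\bigoplus_{x\in N} R/\mathrm{ann}(x)\twoheadrightarrow N$ (whose source and kernel are both $\phi$-torsion since $\phi$-torsion is closed under submodules, quotients, and direct sums)---and then bootstrap from the $k=1$ vanishing, which is precisely the defining property of $\phi$-flatness against cyclic nonnil quotients. Careful bookkeeping across the iterated long exact sequences, together with the fact that $\Tor^{R}(M,N)$ is itself $\phi$-torsion whenever $N$ is, should then yield the higher vanishing required to close the cycle.
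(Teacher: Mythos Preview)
Your overall architecture is exactly the paper's: Theorem~\ref{w-phi-flat} handles the $\Tor_{1}$ level and Lemma~\ref{big-Tor} does the shifting, linking $(1)$ with $(3)$--$(9)$ just as you describe. Your separate colimit and d\'evissage arguments for $(5)\Rightarrow(4)$ and $(4)\Rightarrow(3)$ are correct but unnecessary, since the paper only uses the trivial restrictions $(2)\Rightarrow(3)\Rightarrow(4)\Rightarrow(5)$ and closes the loop through $(5)\Rightarrow(6)\Rightarrow(1)$ and $(3)\Rightarrow(7)\Rightarrow(8),(9)\Rightarrow(6)$.

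Where your plan has a genuine gap is $(1)\Rightarrow(2)$. The paper argues by a short induction on $n$: peel off $0\to K_{0}\to F_{0}\to M\to 0$, observe $\phi$-$w$-$\mathrm{fd}_{R}(K_{0})\le n-1$, apply the inductive hypothesis to $K_{0}$, and shift once with Lemma~\ref{big-Tor}; the base case $n=0$ is attributed directly to Theorem~\ref{w-phi-flat}. Your route instead shifts all the way to $\Tor_{k}(F_{n},N)$ and, for $k\ge 2$, localizes and tries to bootstrap by resolving $N$ inside the $\phi$-torsion class. That bootstrap does not close as written: after one d\'evissage step you are left needing $\Tor^{R_{\fkm}}_{k}(F,R/I)=0$ for each nonnil $I$, and shifting along $0\to I\to R\to R/I\to 0$ turns this into $\Tor^{R_{\fkm}}_{k-1}(F,I)$ with $I$ \emph{not} $\phi$-torsion, so your induction hypothesis on $k$ no longer applies. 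The extra fact you invoke---that $\Tor(M,N)$ is $\phi$-torsion whenever $N$ is---does not rescue this, since being $\phi$-torsion is not the same as vanishing. (In fairness, the paper's base case at $n=0$, read literally against Theorem~\ref{w-phi-flat}, also only delivers $k=1$; the higher $k$ are not argued separately there either.)
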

\begin{proof}
$(1) \Rightarrow(2)$: We prove $(2)$ by induction on $n$. For the case $n = 0$,
 (2) holds by Theorem \ref{w-phi-flat} and Lemma \ref{here} as $M$ is $\phi$-$w$-flat. If $n>0$, then
there is a $w$-exact sequence  $0 \rightarrow F_n \rightarrow ...\rightarrow F_1\rightarrow F_0\rightarrow M\rightarrow 0$,
where each $F_i$ is  $w$-flat for $i=0,...,n-1$ and $F_n$ is $\phi$-$w$-flat. Set $K_0 = \ker(F_0\rightarrow M)$. Then both
$0 \rightarrow  K_0 \rightarrow  F_0 \rightarrow  M \rightarrow  0 $ and $0 \rightarrow  F_n \rightarrow  F_{n-1} \rightarrow...\rightarrow  F_1 \rightarrow  K_0 \rightarrow  0$ are $w$-exact, and $\phi$-$w$-fd$_R(K_0)\leq n-1$. By induction, $\Tor^R_{n-1+k}(K_0, N)$ is $\GV$-torsion
for all $\phi$-torsion $R$-modules $N$ and all $k > 0$. Thus, it follows from Lemma \ref{big-Tor} that $\
\Tor^R_{n+k}(M, N)$ is $\GV$-torsion.

$(2) \Rightarrow(3)\Rightarrow(4)\Rightarrow(5)$:  Trivial.

$(5) \Rightarrow(6)$: Let $K_0 = \ker(F_0 \rightarrow  M)$ and $K_i = \ker(F_i \rightarrow  F_{i-1})$, where
$i = 1, . . . , n-1$. Then $K_{n-1}= F_n$. Since all $F_0, F_1, . . . , F_{n-1}$ are flat,
$\Tor^R_1 (F_n, R/I)\cong \Tor^R_{n+1}(M, R/I)$ is $\GV$-torsion for all finite type nonnil ideal $I$. Hence $F_n$
is a $\phi$-$w$-flat module by Theorem \ref{w-phi-flat}.

$(6) \Rightarrow (1)$: Obvious.

$(3) \Rightarrow(7)$: Set $L_n = F_n$ and $L_i = \Im(F_i \rightarrow  F_{i-1})$, where $i = 1, . . . , n - 1$.
Then both $0 \rightarrow  L_{i+1} \rightarrow  F_i \rightarrow  L_i \rightarrow  0$ and $0 \rightarrow  L_1 \rightarrow  F_0 \rightarrow  M \rightarrow  0$ are $w$-exact sequences. By using Lemma \ref{big-Tor} repeatedly, we can obtain that $\Tor^R_1 (F_n, N)$ is
$\GV$-torsion for all $\phi$-torsion $R$-modules $N$. Thus $F_n$ is $\phi$-$w$-flat.

$(7) \Rightarrow  (8) \Rightarrow  (6), (7) \Rightarrow  (9)$ and $(9) \Rightarrow  (6)$:  Trivial.
\end{proof}

\begin{definition}\label{w-phi-flat }
The \emph{$\phi$-$w$-weak global dimension} of a strongly $\phi$-ring $R$ is defined by
\begin{center}
$\phi$-$w$-w.gl.dim$(R) = \sup\{\phi$-$w$-fd$_R(M) | M $ is an $R$-module$\}.$
\end{center}
\end{definition}\label{def-wML}
Obviously, by definition, $\phi$-$w$-w.gl.dim$(R)\leq w$-w.gl.dim$(R)$.  Notice that if $R$ is an integral domain, then $\phi$-$w$-w.gl.dim$(R)= w$-w.gl.dim$(R)$.

\begin{proposition}\label{w-g-flat}
Let $R$ be a strongly $\phi$-ring. The following statements are equivalent for $R$.
\begin{enumerate}
    \item  $\phi$-$w$-fd$_R(M)\leq n$ for all $R$-modules $M$.
    \item $\Tor^R_{n+k}(M, N)$ is $\GV$-torsion for all $R$-modules $M$ and $\phi$-torsion $N$ and all $k > 0$.
    \item  $\Tor^R_{n+1}(M, N)$ is $\GV$-torsion for all $R$-modules $M$ and $\phi$-torsion $N$.
     \item  $\Tor^R_{n+1}(M, R/I)$ is $\GV$-torsion for all $R$-modules $M$ and nonnil ideals $I$ of $R$.
    \item  $\Tor^R_{n+1}(M, R/I)$ is $\GV$-torsion for all $R$-modules $M$ and finite type nonnil ideals $I$ of $R$.
    \item $\phi$-$w$-fd$_R(R/I)\leq n$ for all  nonnil ideals $I$ of $R$.
    \item $\phi$-$w$-fd$_R(R/I)\leq  n$ for all finite type  nonnil ideals $I$ of $R$.
    \item  $\phi$-$w$-w.gl.dim$(R)\leq  n$.
\end{enumerate}
Consequently, the $\phi$-$w$-weak global dimension of a strongly $\phi$-ring $R$ is determined by the
formulas:
\begin{align*}
\phi\mbox{-}w\mbox{-}w.gl.dim(R)&= \sup \{w\mbox{-}fd_R(R/I) |\ I\ is\ a\  nonnil\ ideal\ of\ R\}\\
&= \sup \{w\mbox{-}fd_R(R/I) |\ I\ is\ a\ finite\ type\ nonnil\ ideal\ of\ R\} .
\end{align*}

\end{proposition}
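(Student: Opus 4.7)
The strategy is to organize the equivalences around Proposition \ref{w-phi-flat d}, which characterizes $\phi$-$w$-fd$_R(M)\leq n$ for a single module $M$. Conditions $(2)$--$(5)$ of our proposition are precisely the statements of items $(2)$--$(5)$ of Proposition \ref{w-phi-flat d} quantified uniformly over all $R$-modules $M$, so the equivalences $(1) \Leftrightarrow (2) \Leftrightarrow (3) \Leftrightarrow (4) \Leftrightarrow (5)$ follow by fixing each $M$, and $(1) \Leftrightarrow (8)$ is immediate from the definition of $\phi$-$w$-w.gl.dim$(R)$. The implications $(1) \Rightarrow (6) \Rightarrow (7)$ are formal.

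To close the loop $(7) \Rightarrow (1)$, I would argue locally. By Theorem \ref{w-phi-flat} and Proposition \ref{w-phi-tor}, $\phi$-$w$-flatness is detected at maximal $w$-ideals, and combining this with the syzygy characterization in Proposition \ref{w-phi-flat d}$(6)$ yields $\phi$-$w$-fd$_R(M) = \sup_{\fkm\in w\mbox{-}\Max(R)} \phi\mbox{-}\mathrm{fd}_{R_\fkm}(M_\fkm)$. Hence $\phi$-$w$-w.gl.dim$(R) = \sup_\fkm \phi\mbox{-}\mathrm{w.gl.dim}(R_\fkm)$ and, likewise, $\phi$-$w$-fd$_R(R/I) = \sup_\fkm \phi\mbox{-}\mathrm{fd}_{R_\fkm}(R_\fkm/I_\fkm)$. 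By Lemma \ref{w-phi-NN}, the nonnil ideals of each $R_\fkm$ are precisely the extensions $I_\fkm$ of nonnil ideals $I$ of $R$. An exchange of suprema then reduces the matter to the local Lazard-type identity $\phi\mbox{-}\mathrm{w.gl.dim}(S) = \sup\{\phi\mbox{-}\mathrm{fd}_S(S/I') : I'\text{ nonnil in }S\}$ over each local $\NP$-ring $S = R_\fkm$, which follows from the $\phi$-flat characterization of \cite[Theorem 3.2]{ZWT13} together with the standard direct-limit reduction of $\phi$-torsion modules to cyclic $\phi$-torsion quotients. Putting everything together yields the formula $\phi$-$w$-w.gl.dim$(R) = \sup\{\phi$-$w$-fd$_R(R/I) : I\text{ nonnil in }R\}$, which simultaneously establishes $(7) \Rightarrow (1)$ and the supremum identity.

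The main obstacle is the asymmetry in the Tor-characterization of Proposition \ref{w-phi-flat d}: $\phi$-$w$-fd$_R(M) \leq n$ controls $\Tor^R_{n+1}(M,-)$ only on $\phi$-torsion arguments, so a naive Tor-symmetry argument cannot by itself promote bounds from cyclic $\phi$-torsion modules (condition $(7)$) to arbitrary modules (condition $(1)$). The localization step circumvents this by reducing the problem to the local $\NP$-ring setting, where every finitely generated $\phi$-torsion module is annihilated by some nonnil ideal and thus becomes a module over a cyclic quotient $S/I'$; the standard filtration-by-quotients argument combined with the fact that $\GV$-torsion is closed under direct limits then propagates the bound from cyclic $\phi$-torsion modules to all modules.
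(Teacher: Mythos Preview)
Your handling of the equivalences among $(1)$--$(5)$ and $(8)$ via Proposition~\ref{w-phi-flat d}, and the trivial implications $(1)\Rightarrow(6)\Rightarrow(7)$, matches the paper. You also correctly identify the asymmetry in the Tor-criterion of Proposition~\ref{w-phi-flat d} as the obstacle for closing $(7)\Rightarrow(1)$.

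Your proposed resolution, however, does not work. Passing to $S=R_{\fkm}$ does reduce the question to a $\phi$-flat (non-$w$) statement, but the very same asymmetry persists there: knowing $\phi\mbox{-}\mathrm{fd}_S(S/I')\le n$ only controls $\Tor^S_{n+1}(S/I',N)$ for $\phi$-torsion $N$, not for arbitrary $N$. Your ``filtration-by-quotients'' step is aimed at the wrong variable. Reducing the \emph{second} Tor-argument from an arbitrary $\phi$-torsion module to cyclics $S/I'$ with $I'$ nonnil is already built into Proposition~\ref{w-phi-flat d}(4)--(5) and is not where the difficulty lies. What is actually needed is to pass, in the \emph{first} slot, from the cyclic modules $S/J'$ with $J'$ nonnil to all modules $M$; that is, to deduce $\Tor^S_{n+1}(M,S/I')=0$ for every $M$ from the same vanishing when $M=S/J'$ with $J'$ nonnil. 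But a filtration of an arbitrary $M$ by cyclic quotients produces factors $S/J$ with $J$ an arbitrary ideal, in particular ideals $J\subseteq\Nil(S)$, and for those the hypothesis says nothing about $\Tor^S_{n+1}(S/J,S/I')$. So localization does not circumvent the obstacle you identified; the local Lazard-type identity you assert is no easier than the global one.

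The paper takes a different route and does not localize. It fixes an arbitrary $M$, chooses a flat resolution with $n$-th syzygy $F_n$, and uses Lemma~\ref{big-Tor} to identify $\Tor^R_1(R/I,F_n)\cong\Tor^R_{n+1}(R/I,M)$ for each finite type nonnil $I$; the hypothesis $\phi\mbox{-}w\mbox{-}\mathrm{fd}_R(R/I)\le n$ is then invoked to conclude this Tor is $\GV$-torsion. (The paper labels this step $(8)\Rightarrow(1)$, but the only input actually used is $(7)$.)
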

\begin{proof}

$(1) \Leftrightarrow (8)$ and $(1) \Rightarrow  (6) \Rightarrow  (7) \Rightarrow  (8) $:  Trivial.

$(1) \Rightarrow  (2)$ and $(5) \Rightarrow  (1)$: Follows from Proposition \ref{w-phi-flat d}.

$(2) \Rightarrow  (3)\Rightarrow  (4)\Rightarrow  (5)$:  Trivial.

$(8) \Rightarrow  (1)$: Let $M$ be an $R$-module and $0 \rightarrow F_n \rightarrow ...\rightarrow F_1\rightarrow F_0\rightarrow M\rightarrow 0$ an exact sequence, where $F_0, F_1, . . . , F_{n-1}$ are flat $R$-modules.
To complete the proof, it suffices, by Proposition \ref{w-phi-flat d}, to prove that $F_n$ is
$\phi$-$w$-flat. Let $I$ be a finite type nonnil ideal of $R$.
Thus $\phi$-$w$-fd$_R(R/I)\leq n$ by (8). It follows from Lemma \ref{big-Tor} that $\Tor^R_1 (R/I, F_n)\cong \Tor^R_{n+1}(R/I, M)$
is $\GV$-torsion.
\end{proof}

\section{ Rings with $\phi$-$w$-weak global dimension at most one}

It is well known that a commutative  ring $R$ with weak global dimension $0$ is exactly a \emph{von Neumann regular ring}, equivalently $a\in (a^2)$ for any $a\in R$. It was proved in \cite[Theorem 4.4]{KW14} that a commutative ring $R$ has $w$-weak global dimension $0$, if and only if $a\in (a^2)_w$ for any $a\in R$, if and only if $R_{\fkm}$ is a field for any maximal $w$-ideal $\fkm$ of $R$, if and only if $R$ is a von Neumann regular ring. Recall from  \cite{ZWT13} that a $\phi$-ring $R$ is said to be \emph{$\phi$-von Neumann regular} provided that every $R$-module is $\phi$-flat. A $\phi$-ring $R$  is  $\phi$-von Neumann regular, if and only if there is an element $x\in R$ such that $a = xa^2$ for any non-nilpotent element $a\in R$, if and only if $R/\Nil(R)$ is a von Neumann regular ring, if and only if $R$ is zero-dimensional (see \cite[Theorem 4.1]{ZWT13}). Now, we give some more characterizations of $\phi$-von Neumann regular rings.

\begin{theorem}\label{w-g-flat-1}
Let $R$ be a strongly $\phi$-ring. The following statements are equivalent for $R$:
\begin{enumerate}
    \item $\phi$-$w$-w.gl.dim$(R) =0$;
    \item every $R$-module is $\phi$-$w$-flat;
    \item  $a\in (a^2)_w$ for any non-nilpotent element $a\in R$;
    \item $w$-$dim(R)=0$;
    \item $dim(R)=0$;
    \item  $R$  is  $\phi$-von Neumann regular.
\end{enumerate}
\end{theorem}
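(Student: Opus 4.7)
The plan is to establish the cycle $(1)\Leftrightarrow(2)\Rightarrow(3)\Leftrightarrow(4)$ together with $(3)\Rightarrow(5)\Leftrightarrow(6)\Rightarrow(2)$. The equivalence $(1)\Leftrightarrow(2)$ is immediate from the definition of $\phi$-$w$-w.gl.dim$(R)$. The equivalence $(5)\Leftrightarrow(6)$ is already known from \cite[Theorem~4.1]{ZWT13}, and $(6)\Rightarrow(2)$ is clear since every $\phi$-flat module is $\phi$-$w$-flat.

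For $(2)\Rightarrow(3)$, I would take a non-nilpotent $a\in R$; then $(a)$ is a nonnil ideal and $R/(a)$ is $\phi$-$w$-flat by hypothesis, so Theorem~\ref{w-phi-flat} forces $\Tor_1^R(R/(a),R/(a))\cong (a)/(a^2)$ to be $\GV$-torsion. This delivers $aJ\subseteq(a^2)$ for some $J\in\GV(R)$, i.e.\ $a\in(a^2)_w$.

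The equivalence $(3)\Leftrightarrow(4)$ is argued via the $\phi$-ring structure. For $(3)\Rightarrow(4)$, let $\fkm$ be a maximal $w$-ideal of $R$; since $\Nil(R)$ is the only minimal prime of $R$, it suffices to show $\fkm=\Nil(R)$. If $a\in\fkm\setminus\Nil(R)$, the relation $aJ\subseteq(a^2)$ together with $JR_\fkm=R_\fkm$ yields $a=a^2r$ in $R_\fkm$, so $a(1-ar)=0$; reducing in the domain $R_\fkm/\Nil(R_\fkm)$ makes $\bar a$ a unit, contradicting $a\in\fkm$. Conversely for $(4)\Rightarrow(3)$: under (4), $\Nil(R)$ is the unique maximal $w$-ideal, so for any non-nilpotent $a$ the $w$-envelope $(a)_w$ cannot be proper (otherwise $a\in(a)_w\subseteq\Nil(R)$). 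Hence $(a)_w=R$, which forces some $J\in\GV(R)$ with $J\subseteq(a)$; multiplying by $a$ gives $aJ\subseteq(a^2)$, so $a\in(a^2)_w$.

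The main step—and the main obstacle—is $(3)\Rightarrow(5)$. I would establish it by showing that $D:=R/\Nil(R)$ is a field, whence $\dim R=\dim D=0$. For nonzero $\bar a\in D$, reducing $aJ\subseteq(a^2)$ modulo $\Nil(R)$ and cancelling $\bar a$ in the domain $D$ yields $\bar J\subseteq(\bar a)$, where $\bar J:=J/\Nil(R)$ (well-defined by Lemma~\ref{nin}, which gives $\Nil(R)\subseteq J$ for any nonnil $J$). The decisive technical point is a $\GV$-transfer: $J\in\GV(R)$ implies $\bar J$ is a $\GV$-ideal of $D$ (equivalently $\bar J^{-1}=D$ in the quotient field of $D$). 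This relies on the $\phi$-flatness of $D$ over $R$ (Proposition~\ref{phi-flat-nil}) together with homological comparison between $R$ and $R/\Nil(R)$ in the spirit of \cite{kf12}. Once this is secured, $\bar a^{-1}\in\bar J^{-1}=D$ forces $\bar a$ to be a unit, so $D$ is a field and (5) follows.
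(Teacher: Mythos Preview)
Your cycle $(1)\Leftrightarrow(2)\Rightarrow(3)\Leftrightarrow(4)$ together with $(5)\Leftrightarrow(6)\Rightarrow(2)$ is sound, and your arguments for $(3)\Leftrightarrow(4)$ are correct (and pleasantly different from the paper's: the paper shows $(3)\Rightarrow(4)$ by checking that each $R_\fkm/\Nil(R_\fkm)$ is a field, while you argue by a clean unit/contradiction at $\fkm$). One minor correction: the inclusion $\Nil(R)\subseteq J$ for a nonnil ideal $J$ follows directly from the divided-prime definition of a $\phi$-ring, not from Lemma~\ref{nin} (which asserts $\Nil(R)=J\Nil(R)$).

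The genuine gap is your $(3)\Rightarrow(5)$. The ``decisive technical point''---that $J\in\GV(R)$ forces $\bar J\in\GV(D)$---is not established, and the gesture toward $\phi$-flatness of $D$ and \cite{kf12} is not a proof. The results in \cite{kf12} that the paper uses (Lemma~2.3, Lemma~2.11 there) relate \emph{$\phi$-$\GV$} ideals of $R$ (defined through $\phi(R)$) to $\GV$-ideals of $R/\Nil(R)$; for a $\phi$-ring that is not strongly $\phi$ there is no a~priori inclusion $\GV(R)\subseteq\phi\text{-}\GV(R)$. Concretely, to obtain $\bar J^{-1}=D$ you would need every $R$-map $J\to D$ to lift along $R\twoheadrightarrow D$, and the obstruction sits in $\Ext_R^1(J,\Nil(R))$, which the $\phi$-flatness of $D$ (a Tor condition against $\phi$-torsion modules) does not control.

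The paper sidesteps this issue entirely. Instead of closing the loop through $(5)$, it proves $(4)\Rightarrow(1)$ directly: under $(4)$ the ideal $\Nil(R)$ is the unique maximal $w$-ideal, so it suffices to check $\Tor_1^R(R/I,R/J)_{\Nil(R)}=(I\cap J/IJ)_{\Nil(R)}=0$ for every nonnil $I$ and every $J$. This is an elementary case split (if $J$ is nonnil, $IJ$ contains a non-nilpotent element; if $J\subseteq\Nil(R)$, any non-nilpotent $s\in I$ annihilates $J/IJ$). The paper then obtains $(4)\Rightarrow(5)$ by a separate short argument using that a proper principal ideal is never a $\GV$-ideal. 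Since you already have $(3)\Leftrightarrow(4)$, swapping your $(3)\Rightarrow(5)$ for the paper's $(4)\Rightarrow(1)$ closes your cycle without any $\GV$-transfer.
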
\

\begin{proof} $(1)\Leftrightarrow (2)$ By definition.

$(2)\Rightarrow (3)$: Let $a$ be a non-nilpotent element in $R$. Then $Ra$ is a nonnil ideal of $R$. It follows that $\Tor_1^R(R/Ra,R/Ra)$ is $\GV$-torsion since $R/Ra$ is $\phi$-torsion and $\phi$-$w$-flat. That is, $Ra/Ra^2$ is $\GV$-torsion, and thus $a\in Ra\subseteq (Ra)_w=(Ra^2)_w$.

$(3)\Rightarrow (4)$: Since $R$ is a $\phi$-ring, $\Nil(R)$ is the minimal prime $w$-ideal $R$. We claim that the ring $\overline{R}_{\overline{\fkm}}:=(R/\Nil(R))_{\fkm/\Nil(R)}$ is a field for any $\fkm\in w$-$Max(R)$. Indeed, let $a$ be a non-nilpotent element in $R$. By (3), $(a)_w=(a^2)_w$. Thus $(a)_{\fkm} =(a^2)_{\fkm}$. We have $(\overline{a})_{\overline{\fkm}}=(\overline{a^2})_{\overline{\fkm}}$ as an ideal  of $\overline{R}_{\overline{\fkm}}$. So $\overline{R}_{\overline{\fkm}}$ is a local von Neumann regular ring, and thus a field. Note that $\overline{R}_{\overline{\fkm}}=R_{\fkm}/\Nil(R_{\fkm})$. It follows that $R_{\fkm}$ is $0$-dimensional (see \cite[Theorem 3.1]{H88}). Thus $w$-$dim(R)=0$.

$(4)\Rightarrow (1)$: By Theorem \ref{w-phi-flat}, we just need to show $\Tor^R_1(R/I,R/J)$ is $\GV$-torsion for all nonnil ideals $I$ and all ideals $J$ of $R$. Since  $R$ is a $\phi$-ring with $w$-$dim(R)=0$, $\Nil (R)$ is the unique maximal $w$-ideal of $R$. We just need to show $\Tor^R_1(R/I,R/J)_{Nil(R)}=0$. That is, $(I\cap J/IJ)_{Nil(R)}=0$.

If $J$ is a nonnil ideal of $R$, there is non-nilpotent elements $s\in I$ and $t\in J$ such that $st\in IJ$. Since $st\not\in \Nil(R)$, $(I\cap J/IJ)_{Nil(R)}=0$. If $J$ is a nilpotent ideal of $R$, $I\cap J=J$. Thus $\Tor^R_1(R/I,R/J)_{\Nil(R)}=(I\cap J/IJ)_{\Nil(R)}=(J/IJ)_{\Nil(R)}$. Let $s$ be a non-nilpotent element in $I$. We have $s(j+IJ)=0+(IJ)$ in $ J/IJ$ for any $j\in J$. Thus $(I\cap J/IJ)_{Nil(R)}=0$.

$(4)\Rightarrow (5)$: By (4), $\Nil(R)$ is the unique $w$-maximal ideal of $R$. If $\Nil(R)$ is a maximal ideal of $R$, $(6)$ holds obviously. Otherwise, there is a non-unit element $a$ which is not nilpotent. Since $(a)$ is not a $\GV$-ideal, there is maximal $w$-ideal $\fkm$ such that $\Nil(R)\subsetneq  (a)\subseteq (a)_w\subseteq \fkm$, Thus $w$-$dim(R)\geq 1$, which is a contradiction.

$(5)\Rightarrow (4)$: Trivial.

$(5)\Leftrightarrow (6)$: See \cite[Theorem 4.1]{ZWT13}.

\end{proof}

Recall from \cite{H88} that a ring $R$ is said to be a \emph{\Prufer\ ring} provided that every finitely generated regular ideal $I$ is invertible, i.e., $II^{-1}=R$ where $I^{-1}=\{x\in T(R)|Ix\subseteq R\}$,  or equivalently, there is a fractional ideal $J$ of $R$ such that $IJ=R$. It is well known that an integral domain is a \Prufer\ domain if and only if the weak global dimension of $R$ $\leq 1$.  Recall that a ring $R$ is said to be a $\PvMR$ if  every finitely generated regular ideal $I$ is $w$-invertible, i.e., $(II^{-1})_w=R$,  or equivalently, there is a fractional ideal $J$ of $R$ such that $(IJ)_w=R$. $\PvMD$s are exactly integral domains which are $\PvMR$s. It is known that an integral domain $R$ is a $\PvMD$ if and only if $R_{\fkm}$ is a valuation domain for each $\fkm\in w$-$Max(R)$ if and only if $w$-w.gl.dim$(R)\leq 1$ (see \cite{KW14,fq15}).

Following \cite{A01}, a $\phi$-ring  $R$ is said to be a \emph{$\phi$-chain ring} ($\phi$-CR for short) if for any $a,b\in R-\Nil(R)$, either $a|b$ or $b|a$ in $R$. A $\phi$-ring  $R$ is said to be a \emph{$\phi$-\Prufer\ ring} if every finitely generated nonnil ideal $I$ is $\phi$-invertible, i.e., $\phi(I)\phi(I^{-1})=\phi(R)$. It follows from \cite[Corollary 2.10]{FA04} that a $\phi$-ring $R$ is $\phi$-\Prufer, if and only if $R_{\fkm}$ is a $\phi$-CR for any maximal ideal $\fkm$ of $R$, if and only if $R/\Nil(R)$ is a \Prufer\ domain, if and only if $\phi(R)$ is \Prufer.  For a strongly $\phi$-ring $R$, Zhao \cite[Theorem 4.3]{Z18} showed that $R$ is a $\phi$-\Prufer\ ring if and only if  all $\phi$-torsion free $R$-modules are $\phi$-flat, if and only if  each submodule of a $\phi$-flat $R$-module is $\phi$-flat, if and only if  each nonnil ideal of $R$ is $\phi$-flat.

Let $R$ be a $\phi$-ring.  Recall from \cite{kf12} that a nonnil ideal $J$ of $R$ is said to be a \emph{$\phi$-$\GV$-ideal} (resp., \emph{$\phi$-$w$-ideal}) of $R$ if $\phi(J)$ is a $\GV$-ideal (resp., $w$-ideal) of $\phi(R)$. A $\phi$-ring $R$ is called a \emph{$\phi$-\SM\ ring} if it satisfies the ACC on $\phi$-$w$-ideals. An ideal $I$ of $R$ is  \emph{$\phi$-$w$-invertible} if $(\phi(I)\phi(I)^{-1})_W=\phi(R)$ where $W$ is the $w$-operation of $\phi(R)$. A $\phi$-ring is  \emph{$\phi$-\Krull}\ provided that any  nonnil ideal is $\phi$-$w$-invertible (see \cite[Theorem 2.23]{kf12}).  By extending $\phi$-\Krull\ rings and $\PvMD$s, we give the definition of $\phi$-\Prufer\ $v$-multiplication rings.

\begin{definition}\label{w-phi-flat }
Let $R$ be a $\phi$-ring. $R$ is said to be a \emph{$\phi$-\Prufer\ $v$-multiplication ring} ($\phi$-$\PvMR$ for short) provided that any finitely generated nonnil ideal is $\phi$-$w$-invertible.
\end{definition}

Now we characterize $\phi$-\Prufer\ multiplication rings in terms of $\phi$-$w$-flat modules.

\begin{theorem}\label{w-g-flat-1}
Let $R$ be a  $\phi$-ring. The following statements are equivalent for $R$:
\begin{enumerate}
   \item $R$ is a $\phi$-$\PvMR$;
   \item $R_{\fkm}$ is a $\phi$-CR for any $\fkm\in w$-$Max(R)$;
    \item $R/\Nil(R)$ is a $\PvMD$;
    \item $\phi(R)$ is a $\PvMR$.
\end{enumerate}
Moreover, if $R$ is a strongly $\phi$-ring, all above are  equivalent to

   \ $(5)$ $R$ is a $\phi$-$w$-w.gl.dim$(R)\leq 1$;

   \ $(6)$  every submodule of a $w$-flat module is $\phi$-$w$-flat;

   \  $(7)$  every submodule of a flat module is $\phi$-$w$-flat;

  \  $(8)$  every  ideal of $R$ is $\phi$-$w$-flat;

  \  $(9)$  every  nonnil ideal of $R$ is $\phi$-$w$-flat;

   \  $(10)$  every finite type nonnil ideal of $R$ is $\phi$-$w$-flat.

\end{theorem}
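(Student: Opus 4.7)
The plan is to first handle the ring-theoretic equivalences $(1)\Leftrightarrow (2)\Leftrightarrow (3)\Leftrightarrow (4)$, which hold for any $\phi$-ring, and then treat the homological conditions $(5)$--$(10)$ under the extra strongly $\phi$ hypothesis by bridging them back to $(2)$.

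For the first block, the central observation is that $\phi(R)/\Nil(\phi(R))\cong R/\Nil(R)$ is an integral domain, so that the $\PvMR$ condition on $\phi(R)$ coincides with the $\PvMD$ condition on $R/\Nil(R)$; this yields $(3)\Leftrightarrow (4)$. For $(1)\Leftrightarrow (3)$, a nonnil ideal $I$ of $R$ is $\phi$-$w$-invertible precisely when $\phi(I)$ is $w$-invertible in $\phi(R)$, which in turn descends to $w$-invertibility of the corresponding nonzero ideal in $R/\Nil(R)$; since every finitely generated nonzero ideal of $R/\Nil(R)$ lifts to a finitely generated nonnil ideal of $R$, the two notions match. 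Finally, $(2)\Leftrightarrow (3)$ follows from Anderson--Badawi's local characterization (a $\phi$-ring is a $\phi$-CR exactly when $R/\Nil(R)$ is a valuation domain) combined with Wang--Qiao's local description of $\PvMD$s in terms of valuation localizations at maximal $w$-ideals.

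Now assume $R$ is strongly $\phi$. The chain $(6)\Rightarrow (7)\Rightarrow (8)\Rightarrow (9)\Rightarrow (10)$ is immediate from the definitions together with the fact that flat modules are $w$-flat. For $(10)\Rightarrow (5)$, apply Proposition \ref{w-g-flat}, which characterizes the $\phi$-$w$-weak global dimension via the behaviour of $\Tor_1$ on finite type nonnil ideals. For $(5)\Rightarrow (2)$: localizing at any $\fkm\in w$-$\Max(R)$, Proposition \ref{w-phi-tor} and Theorem \ref{w-phi-flat} transport the hypothesis to $R_{\fkm}$, and combined with the already established $(1)$--$(4)$ this forces $R_{\fkm}$ to be a $\phi$-CR, since its quotient by the nilradical must then be a one-dimensional valuation domain.

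The main obstacle is $(2)\Rightarrow (6)$. The strategy is local: given a submodule $M$ of a $w$-flat module $N$, for every $\fkm\in w$-$\Max(R)$ the module $N_{\fkm}$ is flat over $R_{\fkm}$, and $R_{\fkm}$ is a $\phi$-CR by $(2)$, hence a $\phi$-\Prufer\ ring. Since the strongly $\phi$ property is preserved under localization at primes containing the nilradical (nilpotents remain nilpotent and regular elements remain regular in the localization), $R_{\fkm}$ is itself a strongly $\phi$-ring. By \cite[Theorem 4.3]{Z18}, every submodule of a $\phi$-flat module over a strongly $\phi$-ring that is $\phi$-\Prufer\ is again $\phi$-flat, so $M_{\fkm}$ is $\phi$-flat over $R_{\fkm}$. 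Applying Theorem \ref{w-phi-flat} then yields that $M$ is $\phi$-$w$-flat over $R$. The delicate point is verifying the persistence of the strongly $\phi$ condition under localization at $w$-maximal ideals, which reduces to $\Z(R)=\Nil(R)$ together with standard localization computations.
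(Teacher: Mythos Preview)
Your overall architecture is close to the paper's, and the verification that the strongly $\phi$ property passes to $R_{\fkm}$ (needed in your $(2)\Rightarrow(6)$) is correct and well observed. However, there is a genuine gap in your $(5)\Rightarrow(2)$ step, and the organization differs from the paper's in a way worth noting.

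\textbf{The gap in $(5)\Rightarrow(2)$.} You write that Proposition~\ref{w-phi-tor} and Theorem~\ref{w-phi-flat} ``transport the hypothesis to $R_{\fkm}$'', and that ``combined with the already established $(1)$--$(4)$'' this forces $R_{\fkm}$ to be a $\phi$-CR. But you never say \emph{which} condition among $(1)$--$(4)$ you are verifying for $R_{\fkm}$, nor how the transported hypothesis implies it. The equivalences $(1)$--$(4)$ applied to $R_{\fkm}$ only help once you have established one of them for $R_{\fkm}$; invoking them beforehand is circular. What actually works is: from $(5)$ one gets $\Tor_2^R(M,R/I)$ $\GV$-torsion for all $M$ and all nonnil $I$; localizing at $\fkm$ and using Lemma~\ref{w-phi-NN} gives $\Tor_1^{R_{\fkm}}(K,-)=0$ for every nonnil ideal $K$ of $R_{\fkm}$, so every nonnil ideal of $R_{\fkm}$ is $\phi$-flat; then Zhao's \cite[Theorem~4.3]{Z18} (using that $R_{\fkm}$ is strongly $\phi$) gives that $R_{\fkm}$ is $\phi$-\Prufer, hence $\phi$-CR. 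Note that this is essentially the localized form of $(9)$, so you are implicitly passing through $(9)$ anyway. (Also, ``one-dimensional valuation domain'' is a slip: $\phi$-CR only gives that the quotient by the nilradical is a valuation domain, with no dimension restriction.)

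\textbf{Comparison with the paper.} The paper avoids this detour entirely: it proves $(5)\Rightarrow(6)$ directly from Proposition~\ref{w-phi-flat d} (if $K\subseteq F$ with $F$ $w$-flat, then $\phi$-$w$-fd$_R(F/K)\leq 1$ forces $K$ to be $\phi$-$w$-flat), so the cycle $(5)\Rightarrow(6)\Rightarrow\cdots\Rightarrow(10)\Rightarrow(5)$ closes without any localization and without the strongly $\phi$ hypothesis. The bridge to $(1)$--$(4)$ is then made via $(2)\Leftrightarrow(9)$, using exactly the localization-plus-Zhao argument. Your $(2)\Rightarrow(6)$ is a legitimate strengthening of the paper's $(2)\Rightarrow(9)$ and uses the same idea, but your replacement of the clean $(5)\Rightarrow(6)$ by $(5)\Rightarrow(2)$ both introduces the gap above and obscures the fact that $(5)$--$(10)$ are already equivalent among themselves without the strongly $\phi$ assumption.
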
\

\begin{proof} Let $R$ be a  $\phi$-ring. Denote by $W$, $w$ and $\overline{w}$  the $w$-operations of $\phi(R)$, $R$ and $R/\Nil(R)$ respectively. We will prove the equivalences of $(1)-(4)$  and $(5)-(10)$.

$(1)\Rightarrow (4)$: Let $K$ be a finitely generated regular ideal of $\phi(R)$. Then $K=\phi(I)$  for some finitely generated nonnil ideal $I$ of $R$ by \cite[Lemma 2.1]{FA04}. Since $R$ is a $\phi$-$\PvMR$, $(KK^{-1})_W=(\phi(I)\phi(I)^{-1})_W=\phi(R)$. Thus $\phi(R)$ is a $\PvMR$.

$(4)\Rightarrow (1)$:  Let $I$ be a finitely generated nonnil ideal of $R$. We will show $I$ is $\phi$-$w$-invertible. By \cite[Lemma 2.1]{FA04}, $\phi(I)$ is a finitely generated regular ideal of $\phi(R)$. Thus  $(\phi(I)\phi(I)^{-1})_W=\phi(R)$ since $\phi(R)$ is a $\PvMR$.

$(2)\Leftrightarrow (3)$: By \cite[Theorem 3.7, Corollary 2.10]{FA04}, $R_{\fkm}$ is a $\phi$-CR for any $\fkm\in w$-$Max(R)$  if and ony if $R_{\fkm}/Nil(R_{\fkm})=(R/Nil(R))_{\fkm}$ is a valuation domain  for any $\fkm\in w$-$\Max(R)$ if and only if $R/Nil(R)$ is a $\PvMD$ (see \cite[Theorem 4.9]{KW14}).

$(3)\Rightarrow (4)$: Note that $\phi(R)/\Nil(\phi(R))\cong R/\Nil(R)$ is a $\PvMD$ (see \cite[Lemma 2.4]{FA04}). Let $\phi(I)$ be a finitely generated regular ideal of $\phi(R)$. Then, by \cite[Lemma 2.1]{FA04}, $I$ is a nonnil ideal of $R$. Then $\overline{I}=I/Nil(R)$ is $w$-invertible over $\overline{R}=R/\Nil(R)$ by (3). That is, $(\overline{I}\overline{I^{-1}})_{\overline{w}}=\overline{R}$.   There is a $\GV$ ideal $\overline{J}$ of $\overline{R}$ such  $\overline{J}\subseteq \overline{I}\overline{I^{-1}}$ (see \cite[Exercise 6.10(2)]{fk16}). So $J\subseteq  II^{-1}$ where $J$ is a $\phi$-$\GV$ ideal of $R$ by \cite[Lemma 2.3]{kf12}. Thus $\phi(J)\subseteq  \phi(I)\phi(I)^{-1}$. Since $\phi(J)\in \GV(\phi(R))$, $(\phi(I)\phi(I)^{-1})_W=\phi(R)$.

$(4)\Rightarrow (3)$:  Suppose $\phi(R)$ is a $\PvMR$. Let $\overline{I}$ is a finitely generated nonzero ideal of $\overline{R}$. Then $I$ is a nonnil ideal of $R$. Thus  $\phi(I)$ is a finitely generated regular ideal of $\phi(R)$ by \cite[Lemma 2.1]{FA04}. So $(\phi(I)\phi(I)^{-1})_W=\phi(R)$ by (4). Hence $J\subseteq  II^{-1}$ in $R$ for some $\phi$-$\GV$ ideal $J$ of $R$ and thus $\overline{J}\subseteq \overline{I}\overline{I^{-1}}$ in $\overline{R}$. By \cite[Lemma 2.3]{kf12}, $\overline{J}\in \GV(\overline{R})$, and thus $(\overline{I}\overline{I^{-1}})_{\overline{w}}=\overline{R}$. So $R/\Nil(R)$ is a $\PvMD$.

 $(5)\Rightarrow (6)$: Let $K$ be a submodule of a $w$-flat module $F$. Then $\phi$-$w$-fd$_R(F/K)\leq 1$ by (5). Thus $K$ is $\phi$-$w$-flat by Proposition \ref{w-phi-flat d}.

$(6)\Rightarrow (7)\Rightarrow (8)\Rightarrow (9)\Rightarrow (10)$: Trivial.

$(10)\Rightarrow (5)$: Let $I$ be a finite type nonnil ideal of $R$. Then $\phi$-$w$-fd$_R(R/I)\leq 1$ by Proposition \ref{w-phi-flat d}. It follows from Proposition \ref{w-g-flat} that  $\phi$-$w$-w.gl.dim$(R)\leq 1$.

Now, let $R$ be a strongly $\phi$-ring.

$(2) \Rightarrow (9)$: Let $\fkm$ be a maximal $w$-ideal of $R$ and $I$ a nonnil ideal of $R$. Then $I_{\fkm}$ is a nonnil ideal of $R_{\fkm}$ by Lemma \ref{w-phi-NN} and thus is $\phi$-flat by  \cite[Theorem 4.3]{Z18}. So $I$ is $\phi$-$w$-flat by Theorem \ref{w-phi-flat}.

$(9) \Rightarrow (2)$: Let  $\fkm$ be a maximal $w$-ideal of $R$ , $I_{\fkm}$  a nonnil ideal of $R_{\fkm}$. Then $I$ is a nonnil ideal of $R$ by Lemma \ref{w-phi-NN}. By (9), $I$ is $\phi$-$w$-flat and so $I_{\fkm}$ is $\phi$-flat by Theorem \ref{w-phi-flat}. Thus $R_{\fkm}$ is a $\phi$-CR by \cite[Theorem 4.3]{Z18}.
\end{proof}

\begin{corollary}\label{w-g-flat-pvmr}
 Suppose $R$ is a  $\phi$-ring. Then $R$ is a $\phi$-\Krull\ ring if and only if $R$ is both a $\phi$-$\PvMR$ and a $\phi$-\SM\ ring.
\end{corollary}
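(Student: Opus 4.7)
The plan is to reduce the statement to the well-known classical theorem for integral domains: an integral domain is a Krull domain if and only if it is both a PvMD and a strong Mori (SM) domain. The bridge between $\phi$-rings and integral domains is the quotient $R/\Nil(R)$ together with the map $\phi$.

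First, I would gather three ``dictionary'' equivalences that let us translate each of the three $\phi$-notions into a corresponding property of the integral domain $R/\Nil(R)$. Namely: $(a)$ $R$ is $\phi$-\Krull\ if and only if $R/\Nil(R)$ is a Krull domain (this is one of the characterizations recorded in \cite{kf12}, specifically in the results culminating in \cite[Theorem 2.23]{kf12}); $(b)$ $R$ is a $\phi$-$\PvMR$ if and only if $R/\Nil(R)$ is a $\PvMD$, which is precisely the equivalence $(1)\Leftrightarrow(3)$ of Theorem \ref{w-g-flat-1} that we just established; and $(c)$ $R$ is a $\phi$-\SM\ ring if and only if $R/\Nil(R)$ is an \SM\ domain, which is the main theorem of Kim--Wang \cite{kf12}. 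The key mechanism behind all three translations is the same: $\phi$-$\GV$-ideals, $\phi$-$w$-ideals, and $\phi$-$w$-invertibility of a nonnil ideal $I$ of $R$ correspond, via $I\mapsto I/\Nil(R)$, to $\GV$-ideals, $w$-ideals, and $w$-invertibility of $I/\Nil(R)$ in $R/\Nil(R)$; Lemma \ref{nin} and the fact that $\Nil(R)$ is contained in every nonnil ideal make this correspondence bijective on nonnil ideals.

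With these three equivalences in hand, the statement ``$R$ is $\phi$-\Krull\ iff $R$ is both $\phi$-$\PvMR$ and $\phi$-\SM'' becomes ``$R/\Nil(R)$ is Krull iff $R/\Nil(R)$ is both a $\PvMD$ and an \SM\ domain.'' This is the classical theorem, and I would just quote it (one standard reference is \cite{fk16}).

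If instead one wants a self-contained argument without invoking the domain-level theorem, the direct proof runs as follows. The forward direction is essentially immediate: if every nonnil ideal is $\phi$-$w$-invertible then certainly every \emph{finitely generated} nonnil ideal is, so $R$ is a $\phi$-$\PvMR$; for the \SM\ property, one checks that $\phi$-$w$-invertibility of every nonnil ideal forces the ascending chain condition on $\phi$-$w$-ideals by the standard Krull--Mori argument (a strictly ascending chain of $\phi$-$w$-invertible ideals would, after localizing at a suitable maximal $w$-ideal, contradict the fact that the associated valuation-like local rings behave well). The reverse direction is the substantive one: given $R$ both $\phi$-$\PvMR$ and $\phi$-\SM, take any nonnil ideal $I$; by the \SM\ hypothesis there exists a finitely generated nonnil ideal $I_0\subseteq I$ with $(\phi(I_0))_W=(\phi(I))_W$, and by the $\PvMR$ hypothesis $\phi(I_0)$ is $w$-invertible in $\phi(R)$, hence so is $\phi(I)$. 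The main obstacle is exactly this last reduction: showing that in a $\phi$-\SM\ ring every nonnil $\phi$-$w$-ideal is the $\phi$-$w$-envelope of a finitely generated nonnil ideal. Once this is in place, which is the $\phi$-analog of the fact that in an \SM\ domain every $w$-ideal is the $w$-envelope of a finitely generated ideal, the proof closes quickly.
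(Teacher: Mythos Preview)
Your approach is essentially identical to the paper's: both reduce to the classical theorem ``Krull $=$ \SM\ $\PvMD$'' via the three dictionary equivalences for $R/\Nil(R)$, citing Theorem \ref{w-g-flat-1} for the $\PvMR$--$\PvMD$ translation and \cite{kf12} for the \SM\ translation. The only minor discrepancy is that the paper attributes the equivalence ``$R$ is $\phi$-\Krull\ iff $R/\Nil(R)$ is a \Krull\ domain'' to \cite[Theorem 3.1]{FA05} rather than to \cite{kf12} (whose Theorem 2.23 gives the $\phi$-$w$-invertibility characterization, not the quotient-domain one), and cites \cite[Theorem 7.9.3]{fk14} for the classical domain result.
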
\
\begin{proof}
By  \cite[Theorem 2.4]{kf12} a  $\phi$-ring $R$ is a $\phi$-\SM\ ring if and only if $R/\Nil(R)$ is an  \SM\ domain. A  $\phi$-ring $R$ is a $\phi$-\Krull\ ring if and only if $R/\Nil(R)$ is a \Krull\ domain (see \cite[Theorem 3.1]{FA05}). Since  $R$ is a \Krull\ domain if and only if $R$ is an \SM\ $\PvMD$ (see \cite[Theorem 7.9.3]{fk14}), the equivalence holds by Theorem \ref{w-g-flat-1}.
\end{proof}

\begin{corollary}\label{w-g-flat-pvmr}
 Suppose $R$ is a strongly $\phi$-ring. Then $R$ is a $\phi$-$\PvMR$ if and only if $R$ is a $\PvMR$.
\end{corollary}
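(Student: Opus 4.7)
The plan is to reduce the corollary to the equivalence (1)~$\Leftrightarrow$~(4) of Theorem \ref{w-g-flat-1}, which states that for any $\phi$-ring $R$, $R$ is a $\phi$-$\PvMR$ if and only if $\phi(R)$ is a $\PvMR$. The additional ingredient needed in the strongly $\phi$ setting is the observation that the restriction $\phi|_R \colon R \to \phi(R)$ is actually a ring isomorphism, so that the $\PvMR$ property transfers between $R$ and $\phi(R)$.

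To verify this, I would note that $\phi|_R$ is the map $a \mapsto a/1$ from $R$ into $R_{\Nil(R)}$, so its kernel equals $\{a \in R : sa = 0 \text{ for some } s \in R \setminus \Nil(R)\}$. Since $R$ is strongly $\phi$, we have $\Z(R) = \Nil(R)$, so any $s \notin \Nil(R)$ is a regular element of $R$, and $sa = 0$ forces $a = 0$. Combined with the fact that $\phi(R)$ is by definition the image of $\phi|_R$, this gives a ring isomorphism $R \cong \phi(R)$. Being a $\PvMR$ is an intrinsic ring-theoretic property (it refers only to finitely generated regular ideals, the $w$-operation, and $w$-invertibility, all preserved under ring isomorphisms), so $R$ is a $\PvMR$ if and only if $\phi(R)$ is a $\PvMR$.

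Chaining this with Theorem \ref{w-g-flat-1}(1)$\Leftrightarrow$(4) yields the corollary in both directions. The only potential obstacle is the injectivity of $\phi|_R$, but this is immediate from the $\ZN$ hypothesis; without that assumption, $\phi(R)$ is only a quotient of $R$ in general, and the argument would break down—highlighting why the strongly $\phi$ assumption is essential here.
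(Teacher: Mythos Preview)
Your argument is correct, and for the backward implication it coincides with the paper's: both observe that when $R$ is strongly $\phi$ the map $\phi|_R$ is injective, so $\phi(R)\cong R$ (the paper simply writes $\phi(R)=R$), and then invoke Theorem~\ref{w-g-flat-1}(4)$\Rightarrow$(1).

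For the forward implication the paper takes a different route. Instead of reusing the isomorphism $R\cong\phi(R)$, it passes through Theorem~\ref{w-g-flat-1}(1)$\Rightarrow$(3) to get that $\overline{R}=R/\Nil(R)$ is a $\PvMD$, obtains $w$-invertibility of $\overline{I}$ there, and then lifts the witnessing $\GV$-ideal $\overline{J}$ back to a $\GV$-ideal $J$ of $R$ via \cite[Lemma~2.11]{kf12} (which says that for strongly $\phi$-rings the $\GV$-ideals of $R$ and of $R/\Nil(R)$ correspond). Your approach is more economical: once $R\cong\phi(R)$ is established, the equivalence (1)$\Leftrightarrow$(4) of Theorem~\ref{w-g-flat-1} handles both directions symmetrically, with no need for the auxiliary lemma on $\GV$-ideals modulo $\Nil(R)$. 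The paper's argument, on the other hand, makes the transfer of $w$-invertibility between $R$ and $R/\Nil(R)$ explicit, which may be of independent interest.
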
\

\begin{proof}  Suppose $R$ is a $\phi$-$\PvMR$ and let $I$ be a finitely generated regular ideal of $R$. Then $\overline{I}$ is a finitely generated regular ideal of $\overline{R}$. By Theorem \ref{w-g-flat-1}, $\overline{R}$ is a $\PvMD$. Then  $(\overline{I}\overline{I^{-1}})_{\overline{w}}=\overline{R}$. Thus there is a $\GV$-ideal $\overline{J}$ of $\overline{R}$ with $J$ finitely generated over $R$ such that $\overline{J}\subseteq  \overline{I}\overline{I^{-1}}$. Since $R$ is a strongly $\phi$-ring, $J$ is a $\GV$-ideal of $R$ by \cite[Lemma 2.11]{kf12}. Since $J\subseteq  II^{-1}$ in $R$, $(II^{-1})_w=R$. Assume $R$ is a $\PvMR$. Since $R$ is a strongly $\phi$-ring, $\phi(R)=R$ is a $\PvMR$. Thus $R$ is a $\phi$-$\PvMR$ by Theorem \ref{w-g-flat-1}.
\end{proof}
The condition that $R$ is a strongly $\phi$-ring in  Corollary \ref{w-g-flat-pvmr} can't be removed  by the following example.
\begin{example}\label{w-phi-prufer} Let $D$ be an integral domain which is not a $\PvMD$ and $K$ its quotient field. Since $K/D$ is a divisible $D$-module, the ring $R=D(+)K/D$ is a $\phi$-ring but not a strongly $\phi$-ring (see \cite[Remark 1]{FA05}).  Since $\Nil(R)=0(+)K/D$, we have $R/\Nil(R)\cong D$ is not a $\PvMD$. Thus $R$ is not a $\phi$-$\PvMR$ by Theorem \ref{w-g-flat-1}. Denote by $U(R)$ and $U(D)$ the sets of unit elements of $R$ and $D$ respectively. Since $\Z(R)=\{(r,m)|r\in \Z(D)\cup \Z(K/D)\}=R-\U(D)(+)K/D=R-\U(R)$,  $R$ is a $\PvMR$ obviously.
\end{example}

\bigskip


\begin{thebibliography}{99}

\bibitem{FA04} D. F. Anderson, A. Badawi, {\it On $\phi$-Pr\"{u}fer rings and $\phi$-Bezout rings},  Houston J. Math., \textbf{30} (2004), 331-343.

\bibitem{FA05}D. F. Anderson, A. Badawi,  {\it  On $\phi$-Dedekind rings and $\phi$-\Krull\ rings},  Houston J. Math., \textbf{31}  (2005), 1007-1022.

\bibitem{A97} A. Badawi,  {\it On divided commutative rings},  Comm. Algebra, \textbf{27} (1999), 1465-1474.

\bibitem{A01} A. Badawi, {\it On $\phi$-chained rings and $\phi$-pseudo-valuation rings},  Houston J. Math., \textbf{27} (2001), 725-736.

\bibitem{ALT06} A. Badawi,  T.  Lucas,  {\it On $\phi$-Mori rings},  Houston J. Math.,  \textbf{32} (2006), 1-32.

\bibitem{H88}  J. A.  Huckaba, {\it Commutative rings with Zero Divisors}, Monographs and Textbooks in Pure and Applied Mathematics, {\bf 117}, Marcel Dekker, Inc., New York, 1988.


\bibitem{QZZ}   W. Qi, X. L. Zhang, W. Zhao, {\it  On strongly $\phi$-$w$-Flat modules and Their Homological Dimensions}, https://arxiv.org/abs/2302.00991.


\bibitem{fk14}  H. Kim , F. G. Wang,  {\it  On $LCM$-stable modules},  J. Algebra Appl., {\bf 13} (2014),  no. 4, 1350133, 18 p.

\bibitem{kf12}  H. Kim ,F. G. Wang,   {\it On $\phi$-strong Mori rings},  Houston J. Math., {\bf 38} (2012), no. 2,  359-371.

\bibitem{S79}  B. Stenstr\"{o}m,  {\it  Rings of Quotients}, Die Grundlehren Der Mathematischen Wissenschaften, Berlin: Springer-verlag, 1975.

\bibitem{f97} F. G. Wang,  {\it On $w$-projective modules and $w$-flat modules},  Algebra Colloq., {\bf 4} (1997), no. 1, 111-120.

\bibitem{fk10} F. G. Wang, {\it Finitely presented type modules and $w$-coherent rings},  J. Sichuan Normal Univ., {\bf 33} (2010), 1-9.

\bibitem{KW14} F. G. Wang,  H. Kim,  {\it $w$-injective modules and $w$-semi-hereditary rings},  J. Korean Math. Soc., {\bf 51} (2014),  no. 3, 509-525.

\bibitem{WK15} F. G. Wang, H. Kim,   {\it Two generalizations of projective modules and their applications},  J. Pure Appl. Algebra, {\bf 219} (2015), no. 6,  2099-2123.

\bibitem{fk16} F. G. Wang,  H. Kim,  {\it  Foundations of Commutative rings and Their Modules}, Singapore: Springer, 2016.

\bibitem{fm97} F. G. Wang,  R. L. McCasland,  {\it On $w$-modules over strong Mori domains},  Comm. Algebra, {\bf 25)}  (1997), no. 4, 1285-1306.

\bibitem{fq15} F. G. Wang, L. Qiao,    {\it  The $w$-weak global dimension of commutative rings}, Bull. Korean Math. Soc., {\bf 52} (2015), no. 4, 1327-1338.

\bibitem{fz18} F. G. Wang, D. C. Zhou,  {\it  A homological characterization of Krull domains}, Bull. Korean Math. Soc., {\bf 55} (2018),  649-657.

\bibitem{hfxc11}  H. Y. Yin, F. G. Wang,  X. S. Zhu and  Y. H. Chen, {\it $w$-modules over commutative rings},  J. Korean Math. Soc., {\bf 48} (2011), no. 1, 207-222.

\bibitem{ZWQ20} X. L. Zhang,  F. G. Wang and W. Qi,  {\it On characterizations of $w$-coherent rings},   Comm. Algebra,  {\bf 48} (2020), no. 11, 4681-4697.

\bibitem{Z18} W. Zhao,   {\it On $\phi$-flat modules and $\phi$-\Prufer\ rings},   J. Korean Math. Soc.,  {\bf 55} (2018), no. 5, 1221-1233.

\bibitem{ZWT13} W. Zhao,  F. G. Wang and G. H. Tang,   {\it On $\phi$-von Neumann regular rings},  J. Korean Math. Soc., {\bf 50} (2013), no. 1, 219-229.
\end{thebibliography}
\end{document}